\documentclass{amsart}
\usepackage[utf8]{inputenc}

\usepackage[english]{babel}
\usepackage{geometry}
\usepackage{xcolor}
\usepackage{amssymb}
\usepackage[all]{xy}

\makeindex

\usepackage{color}


\newtheorem{thm}{Theorem}[section]
\newtheorem{thm*}[thm]{Theorem*}

\newtheorem{cor}[thm]{Corollary}
\newtheorem{cor*}[thm]{Corollary*}

\newtheorem{prop}[thm]{Proposition}
\newtheorem{prop*}[thm]{Proposition*}

\theoremstyle{definition}
\newtheorem{defn}[thm]{Definition}
\newtheorem{example}[thm]{Example}

\newtheorem{rem}[thm]{Remark}   
\newtheorem{rem*}[thm]{Remark*}           

\newtheorem*{ack}{Acknowledgments}      

\newtheorem{defn-thm}[thm]{Definition--Theorem}  
\newtheorem{defn-lem}[thm]{Definition--Lemma}  

\theoremstyle{remark}


\setcounter{section}{0}

\renewcommand{\c}[0]{{\mathbb C}}  

\renewcommand{\o}[0]{{\mathcal O}} 

\renewcommand{\r}[0]{{\mathbb R}}

\newcommand{\p}[0]{{\mathbb P}}

\newcommand{\F}[0]{{\mathbb F}}

\newcommand{\im}[0]{\operatorname{im}}

\newcommand{\Seg}{\operatorname{Seg}}

\newcommand{\C}{\mathbb{C}}




\def\loccoh#1.#2.#3.#4.{H^{#1}_{#2}(#3,#4)}

\DeclareMathAlphabet{\mathchanc}{OT1}{pzc}%
                                {m}{it}

\newcommand{\sym}[0]{\operatorname{Sym}}

\numberwithin{equation}{section}

\begin{document}







\title[Catalecticant intersections and confinement]{Catalecticant intersections and confinement of decompositions of forms}

\author[E. Angelini, C.Bocci, L. Chiantini]{Elena Angelini, Cristiano Bocci, Luca Chiantini}
\address[E.Angelini,  C. Bocci, L. Chiantini]{Dipartimento di Ingegneria dell'Informazione e Scienze Matematiche
\\ Universit\`a di Siena\\ Via Roma 56\\ 53100 Siena, Italia}
\email{elena.angelini@unisi.it, cristiano.bocci@unisi.it, luca.chiantini@unisi.it}

\begin{abstract} 
We introduce the notion of confinement of decompositions for forms or vector of forms. The confinement, when it holds, 
lowers the number of parameters that one needs to consider, in order to find all the possible decompositions of a given set of data. 
With the technique of confinement, we obtain here two results.
First, we give a new, shorter proof of a result by London (\cite{London90}) that $3$ general plane cubics have $2$ simultaneous Waring decompositions of rank $6$. 
Then we compute, with the software Bertini, that $4$ general plane quartics have $18$ different decompositions of rank $10$ (a result which was not known before).\\

\noindent Keywords. Waring decomposition; unidentifiable case; elliptic curve; catalecticant map; real rank.\\
\noindent AMS Classification. 14N07; 14N05; 15A69; 14P05; 14Q99.
\end{abstract}

\maketitle

\section{Introduction}\label{sec:intr}

A classical geometric approach for the study of the number of  
minimal decompositions  of general tensors, in terms of rank $1$ elements, goes back to Terracini and has been
pointed out in \cite{CCi06} and \cite{COtt12}. 
 The method is based on the observation that when a tensor of \emph{sub-generic
rank} has several minimal decompositions, then all the corresponding finite sets are bounded to specific
sub-varieties, the {\it contact loci} of the variety $X$ of rank $1$ tensors. 
Since contact loci determine families of degenerate varieties that cover $X$, their existence has many 
geometric consequences, so that one can detect the existence of contact loci by analyzing the structure of $X$.

\indent Families of sub-varieties which determine a special behavior are rather natural in interpolation theory. 
For interpolation problems on the plane, Segre's conjecture and its many modern variants predict 
that the superabundance of linear systems with prescribed singularities is equivalent to the existence of special varieties passing through 
sets of points  (see e.g. \cite{Ci01}).
Segre's idea that sub-varieties can determine special behavior for interpolation problems has been
generalized in higher dimensional spaces by the second author, who introduced the notion of {\it special effect varieties}
(see \cite{Bocci05}). These varieties are responsible for the existence of interpolating linear systems whose
dimension is higher than the expected value.
The strict connection 
between superabundant interpolating systems for finite sets of points of multiplicity  $2$ and unexpected number of minimal decompositions
is described in details in Section 2 of \cite{CCi06}. It turns out that an unexpected number of minimal decompositions implies the existence
of positive dimensional contact loci, which are degenerate subvarieties of Segre or Veronese varieties, and conversely. Theorem 2.9 of \cite{CCi06} shows that
contact loci provide the first examples of special effect varieties, for multiplicity $2$ interpolation problems. 

Terracini's idea cannot be extended to tensors of generic rank. Indeed, contact loci are defined by
hyperplanes which are tangent at $r$ points to the variety $X$ of rank $1$ objects, where $r$ is the value of the rank
under analysis. By definition, if $r$ is the {\it generic rank}, then there are no hyperplanes tangent to $X$ at $r$ points,
thus the notion of contact locus is meaningless for such value of $r$.

On the other hand, in the study of tensors $T$ of generic rank, as soon as the number of minimal decompositions
is small, then special sub-varieties that enclose all possible decompositions of $T$ arise quite naturally in
many examples (built both theoretically or with the aid of computer algebra packages).
In this respect, an evidence that  families of degenerate sub-varieties play a fundamental
role is explained in the paper of Ciliberto and Russo \cite{CiRusso11}, where 
degenerate rational curves are shown to determine the uniqueness of secant varieties through
generic points, for some special classes of varieties $X$ (see also \cite{PirioRusso13}, Corollary 5.4 and Remark 5.5).\\

One can  guess  that special sub-varieties are responsible for all special behaviors of the decompositions of a general tensor of rank $r$
(note that for sub-generic ranks a special behavior means to have more than one minimal decomposition, while for the generic rank it
means that the number of decompositions is small).

\indent In the case of symmetric tensors $f$ defined over a field $\F$, which will be identified with forms (homogeneous polynomials), 
there exists another approach that can produce sub-varieties bounding the decompositions of $f$.
Indeed, for a degree $d$ form $f$ in $n+1$ variables over $\F$, the contraction by $f$ determines a map $C_f$ 
(the {\it catalecticant map} of \cite{IK})
from the space of polynomial derivatives  of order $h$, which can be identified with $(\sym^{h}\F^{n+1})^{\vee}$,
to the space of forms of degree $d-h$. If $f= \lambda_{1}L_1^d+\dots +\lambda_{k}L_k^d$ is a Waring decomposition 
(the $L_i$'s and $ \lambda_{i} $'s being, respectively, linear forms and scalars), then it is immediate to see that $\p(\im(C_f))$ 
sits into the span $\Lambda$ of the powers $[L_1^{d-h}],\dots, [L_k^{d-h}]$ in the space of forms of degree $d-h$.
If the matrix of $C_f$ has rank $k$ and the $[L_i]$'s are linearly independent, then $\p(\im(C_f))=\Lambda$.
Thus, for any other Waring decomposition $f=\mu_{1}M_1^d+\dots +\mu_{k}M_k^d$, the images of the $M_i$'s under
the $d-h$ Veronese map  are contained in the intersection of $\Lambda$ with
the corresponding Veronese variety $ X^n_{d-h} $.

The intersection $\Lambda\cap X^n_{d-h}$ has been studied until now only in order to prove that a form $f$
is {\it identifiable}, i.e. its minimal Waring decomposition is unique. Namely, in the previous setting, 
if $\Lambda\cap X^n_{d-h}$ consists of $k$ points, then the uniqueness (modulo scaling and
reordering) follows immediately, for a specific $f$. See e.g. \cite{MassaMellaStagliano18} for an account of the method.
 
We propose to extend the method to the case where the intersection
$Y=\Lambda\cap X^n_{d-h}$ is not finite, but yet
$Y$ is a  special subvariety of the space of tensors of rank $1$. Since, in our setting, all the minimal
 decompositions of $f$ must lie on $Y$ and $f$ itself is a point in the projective span of $Y$,
 then if we can compute the number of secant spaces to $Y$ passing through a general point of the span,
 then we get the number of different Waring decompositions of a general $f$.

We present two cases in which the method yields  interesting conclusions. 
 
First (see Theorem \ref{thm:collect1})  we analyze the simultaneous decompositions of $ 3 $ 
forms of degree $3$ in $3$ variables. In this case, following \cite{AngeGaluppiMellaOtt18} Section 2.2, 
we know that the variety of elements of rank $1$ corresponds to the Segre-Veronese embedding of $\p^2\times\p^2$ 
of bi-degree $(1,3)$. This case has been classical considered by London in \cite{London90}. London used a direct method
 for the analysis of the net of quadrics, to determine that a general triple of cubic forms in $3$ variables
 has two minimal simultaneous decompositions. London's approach is so involved that the result has been fully 
 reconsidered and proved by Scorza in \cite{Scorza99}. We present a new, elegant proof of London's result, which makes use
 of our analysis.
 
We observe that the analysis easily extends to simultaneous decompositions of forms of degrees $3,3,3,2, \ldots, 2$ in $3$ variables (see Corollary \ref{thm:collect2}). 
Also in this case, we get that the number of such minimal decompositions is still $ 2 $. The result stated in Corollary \ref{thm:collect2} holds, in particular, 
for the case of quadruples of degrees $3,3,3,2$ in $3$ variables (see Proposition \ref{thm:collect2}); this case has been analyzed only recently by 
Galuppi, Mella, Ottaviani and the first author in \cite{AngeGaluppiMellaOtt18} Section 4, where, after a computer aided analysis by means of the software Bertini (\cite{Bertini}), 
it is conjectured that the number of simultaneous decompositions is $2$.   We can prove that the guess is correct.

 In all these situations, the intersection variety  $Y$ that we find via the catalecticant map
 (which has the role of a {\it special effect variety} for our Waring problem) is an elliptic normal curve. Thus
 we conclude that the number of minimal decompositions is $2$, since  elliptic normal curves are known to have 
 two secant spaces through a general point of the span (see Proposition 5.2 of \cite{CCi06}).

Let us notice that, even for specific tensors or polynomial vectors $f$,  if the catalecticant map determines an intersection variety $Y\subsetneq X$,
then the study of the Waring decompositions of $f$ is reduced to the study of secant spaces
to $Y$ passing through $f$. When the variety $Y$ is well known, we get in this way a lot of information.
For instance, when $Y$ is an elliptic curve (see also Example \ref{ex:sextics}), by means of the main result Theorem 4.2 in \cite{AngeBocciC18}
we can deduce the uniqueness of simultaneous decompositions of $3$ cubics in three variables,  {\it over the real field $\r$},
in a non-empty euclidean open subset.
 
Next (see Theorem \ref{thm:4444})  we analyze the simultaneous decomposition of $ 4 $ general
forms of degree $4$ in $3$ variables. In this case, following \cite{AngeGaluppiMellaOtt18} Section 2.2, 
we know that  the simultaneous rank is $10$, and the variety of elements of rank $1$ corresponds to the Segre-Veronese embedding of $\p^3\times\p^2$ 
of bi-degree $(1,4)$. The number of simultaneous decompositions of a general vector of $4$ quartics was previously unknown: a
direct computational approach did not succeed in providing the number. We use the confinement to a general
3-fold section of the Segre embedding of $\p^3\times\p^2$ (whose birational description is the target of the (short) Section 5) to decrease the
number of parameters. In this new setting, the software Bertini can compute in seconds that the
number of different simultaneous Waring decomposition is $18$ (all the computations have been done by means of Desktop PC - Windows 8 
operating system x64 - Intel Core i5 CPU 1.70 GHz - 4 GB RAM). 

As usual, similar computations by Bertini do not provide a theoretical
proof that the number of decompositions is $18$, but they give us a strong evidence of the result. See \cite{HauensteinOedOttSomm19} for a discussion
on the reliability of Bertini's computations. In Section 6 we will distinguish between theoretically defined results and results for which direct computations
provide an extremely strong evidence by putting in front of them an asterisk * (see e.g. Theorem* \ref{thm:4444}). 

As above, we observe that the analysis easily extends to simultaneous decompositions of forms of degrees 
$4,4,4,4,3, \ldots, 3$ in $3$ variables (see Corollary \ref{thm:4444ext}). 
It is immediate to get that the number of such minimal decompositions is $18$ in this case too.

Let us observe that when $m>4$, for vectors of $m$ forms of degree $m$ in $3$ variables the catalecticant confinement of decompositions
does not hold, so we cannot decrease the number of parameters as before. We believe, however, that many other interesting initial cases
of decompositions of forms or vector of forms can be handled with our technique. 

The paper is structured as follows. In section \ref{sec:preliminaries} we introduce the simultaneous Waring decompositions, 
recalling the main notations and definitions (for a detailed description we refer to \cite{AngeGaluppiMellaOtt18}). 
In section \ref{sec:ellcurv} we show how elliptic normal curves interact with the varieties of rank $ 1 $ elements introduced 
in Example \ref{rem:London} and Example \ref{rem:generalcase}. In section \ref{sec:cat} we explain how certain catalecticant maps combine 
with elliptic normal curves to prove our Theorem \ref{thm:collect1}. Section \ref{sec:p3p2} is devoted to the description of linear sections
of the Segre embedding of $\p^3\times\p^2$. Section \ref{sec:cat4} is devoted to the determine the evidence that
$4$ plane quartics have $18$ different decompositions of rank $10$.

\section{Simultaneous Waring decompositions}\label{sec:preliminaries}

Let $ \mathbb{F} $ be either the complex or real field. Let $ n,r \in \mathbb{N} $ and let $ a_{1}, \ldots, a_{r} \in \mathbb{N} $.
 Inside of $ \mathbb{F}[x_{0}, \ldots, x_{n}] $, which denotes the space of forms in $ n+1 $ variables 
with coefficients in $ \mathbb{F} $, we consider the elements of degree $ a_{j} $, that is the sub-space 
$ \mathbb{F}[x_{0}, \ldots, x_{n}]_{a_{j}} \cong \sym^{a_{j}} \mathbb{F}^{n+1} $, for $ j \in \{1, \ldots, r\} $.    

\begin{defn} (See Section 4 of \cite{AngeGaluppiMellaOtt18})
A \emph{polynomial vector} (or \emph{multiform}) is a vector $ f = (f_{1}, \ldots, f_{r}) $ such that, for all $ j \in \{1, \ldots, r\} $, 
$ f_{j} \in \mathbb{F}[x_{0}, \ldots, x_{n}]_{a_{j}} $. 

A polynomial vector
$ f $ is said to be \emph{general} if each component  $ f_{j} $ is a general element of $ \mathbb{F}[x_{0}, \ldots, x_{n}]_{a_{j}} $. 
\end{defn}

\begin{defn}
A \emph{simultaneous Waring decomposition} of $ f $ over $ \mathbb{F} $ is given by linear forms 
$ \ell_{1}, \ldots, \ell_{k} \in \mathbb{F}[x_{0}, \ldots, x_{n}]_{1} $ and scalars $ (\lambda_{1}^{j}, 
\ldots, \lambda_{k}^{j}) \in \mathbb{F}^{k}-\{\underline{0}\} $, $ j \in \{1, \ldots, r\} $, such that
$$ f_{j} = \lambda_{1}^{j}\ell_{1}^{a_{j}}+ \ldots + \lambda_{k}^{j}\ell_{k}^{a_{j}} $$
for all $ j \in \{1, \ldots, r\} $, or, in vector notation,
\begin{equation}\label{eq:simWardec}
f = \sum_{i=1}^{k}(\lambda_{i}^{1}\ell_{i}^{a_{1}}, \ldots,\lambda_{i}^{r}\ell_{i}^{a_{r}}). 
\end{equation}
The minimum number of summands $ k $ in a simultaneous Waring decomposition of $ f $ over $ \mathbb{F} $ is called the 
\emph{rank} of $ f $ over $ \mathbb{F} $. In particular, any summand in (\ref{eq:simWardec}) has rank $ 1 $ over $ \mathbb{F} $.
\end{defn}

\begin{rem}
If $ f $ is a polynomial vector over $ \mathbb{R} $, then the rank of $ f $ over $ \mathbb{R} $ is greater or equal 
to the rank of $ f $ over $ \mathbb{C} $.
\end{rem}

\begin{defn}
If $ f $ admits a unique presentation (resp. several presentations) over $ \mathbb{F} $ as in (\ref{eq:simWardec}), then $ f $ is 
said to be \emph{identifiable} (resp. \emph{unidentifiable}) over $ \mathbb{F} $.
\end{defn}

Let us denote by $ \p^{n} = \p_{\c}^{n} $ the $ n $-dimensional complex projective space. 

\begin{rem}
According to Section $ 2.2 $ of \cite{AngeGaluppiMellaOtt18}, from a geometric point of view the variety of rank $ 1 $ polynomial vectors over 
$ \mathbb{C} $ with parameters $ (n,r;a_{1}, \ldots, a_{r}) $ is the projective bundle 
$$ X^{n}_{a_{1}, \ldots,a_{r}} = \p(\mathcal{O}_{\p^n}(a_{1}) \oplus \ldots \oplus \mathcal{O}_{\p^n}(a_{r})) \subset 
\p(H^{0}(\mathcal{O}_{\p^n}(a_{1})\oplus \ldots \oplus  \mathcal{O}_{\p^n}(a_{r}))) =  \p^{N-1} $$
where $ N = \sum_{j=1}^{r}\binom{a_{j}+n}{a_{j}} $. Being the projectivization of a vector bundle of rank $r$ over $\p^n$, then
$ X^{n}_{a_{1}, \ldots,a_{r}}$ has projective dimension $r+n-1$ (\cite{Hartshorne}, Ex. II.7.10). This immersion corresponds to the canonical invertible sheaf 
$ \o_{X^{n}_{a_{1}, \ldots,a_{r}}}(1) $. If a polynomial vector $ f \in \p^{N-1} $ can be written as in (\ref{eq:simWardec}), then 
$ f $ belongs to $ S^{k}(X^{n}_{a_{1}, \ldots,a_{r}}) $, the \emph{$ k $-secant variety} of $ X^{n}_{a_{1}, \ldots,a_{r}} $. 
We refer to \cite{Landsberg} for basics about secant varieties and the defectivity problem.
\end{rem}

According to \cite{AngeGaluppiMellaOtt18}, we give the following:

\begin{defn}\label{def:perfect}
The set of parameters $ (n,r;a_{1}, \ldots, a_{r}) $ yields a \emph{perfect case} if 
\begin{equation}\label{eq:perfect}
N = k(\dim(X^{n}_{a_{1}, \ldots,a_{r}})+1). 
\end{equation}
\end{defn}

\begin{rem}\label{rem:perfect}
Throughout the paper we assume to be in a perfect case, i.e. $ k $ is equal to the quotient $N/ (\dim(X^{n}_{a_{1}, \ldots,a_{r}})+1)$ 
and $ S^{k}(X^{n}_{a_{1}, \ldots,a_{r}}) $ is expected to fill the ambient space $ \p^{N-1} $. Moreover, as stated in \cite{CCi06}, 
the general fibre of the \emph{$ k $-secant map} 
$$ p^{k}_{X^{n}_{a_{1}, \ldots,a_{r}}}: S^{k}_{X^{n}_{a_{1}, \ldots,a_{r}}} \to S^{k}(X^{n}_{a_{1}, \ldots,a_{r}}) $$
where $ S^{k}_{X^{n}_{a_{1}, \ldots,a_{r}}} $ denotes the \emph{abstract k-secant variety} of $ X^{n}_{a_{1}, \ldots,a_{r}} $, 
is expected to be finite, that is there are only finitely many projective spaces of dimension $ k-1 $ which are $ k $-secant to 
$ X^{n}_{a_{1}, \ldots,a_{r}} $ and pass through the general polynomial vector $ f \in \p^{N-1}$. 
Therefore, in this case, we expect finitely many simultaneous Waring decompositions for $ f $.
\end{rem}

\begin{example}
In the classical Waring setting, i.e. when $ r=1 $, if, for simplicity, $ a_{1} = d $, then the variety under investigation is
$$ X_{d}^{n} = \p(\o_{\p^n}(d)) = \nu_{d}(\p^n) \subset \p^{\binom{d+n}{n}-1} $$ 
where $ \nu_{d}: \p^n \to \p^{\binom{d+n}{n}-1} $ is the Veronese embedding of $ \p^n $ of degree $ d $.
\end{example}

\begin{example}\label{rem:London}
Assume that $ n=2, \,r=3, a_{1}=a_{2}=a_{3}=3 $. We will refer to this case as {\it London's case}, because it was studied by London in
\cite{London90}. 

We focus on the $ 4 $-dimensional projective bundle
$$ X^{2}_{3,3,3} = \p(\mathcal{O}_{\p^2}(3)^{\oplus 3}) \subset \p(H^{0}(\mathcal{O}_{\p^2}(3)^{\oplus 3})) =  \p^{29}. $$
The condition (\ref{eq:perfect}) is satisfied by $ k=6 $. We note that 
\begin{equation}\label{eq:iso1}
X^{2}_{3,3,3} \cong \p(\mathcal{O}_{\p^2}(1)^{\oplus 3}\otimes \mathcal{O}_{\p^2}(2)) \cong  
\p(\mathcal{O}_{\p^2}^{\oplus 3} \otimes \mathcal{O}_{\p^2}(3)). 
\end{equation}
It is a standard fact that the projective bundle $ X_{1,1,1}^{2} = \p(\mathcal{O}_{\p^2}(1)^{\oplus 3}) $ is isomorphic to 
Seg$( \p^2 \times \p^2)\subset \p^8 $, where Seg denotes the Segre embedding (see e.g. Section 2.2 of \cite{AngeGaluppiMellaOtt18}). 
Namely, let us choose a $ \p^2 $ associated to one of the 
three $\mathcal{O}_{\p^2}(1)$'s and an element $ \phi \in $ Aut$ (\p^2) $ with no fixed points: if, for any $ P \in \p^2 $, we consider 
the projective plane $<P, \phi(P),\phi(\phi(P))>$, we get an image of $ \p^2 \times \p^2 $.\\
\indent By using (\ref{eq:iso1}), it turns out that $ X^{2}_{3,3,3} $ is the image of the embedding $ i_{3,3,3}^{2} $ of 
$ X_{1,1,1}^{2} $ in $ \p^{29} $ given by divisors of type $ (1,3) = (3,3) - (2,0) $ over Seg$( \p^2 \times \p^2) $. 
We observe that a divisor of type $(2,0)$ over Seg$( \p^2 \times \p^2) $ is defined by the Segre product of $ C \times \p^2 $, 
where $ C \subset \p^2 $ is a divisor of degree $ 2 $, i.e. a conic. Since $ C = \nu_{2}(\p^1) $ is the image of the quadratic 
Veronese embedding of $ \p^1 $, we have that Seg$( C \times \p^2) $ corresponds to an embedding of $ \p^1 \times \p^2 $, 
which is usually called the Segre-Veronese embedding. Thus $ i_{3,3,3}^{2} $ is given by cubic hypersurfaces of $ \p^8 $ 
containing Seg$ (C \times \p^2) $, where $ C \subset \p^2 $ is a chosen conic.
\end{example}

\begin{rem}\label{rem:nodef}
With the aid of an algorithm based on Terracini's lemma \cite{Terracini11} and described in Section $ 4 $ of \cite{AngeGaluppiMellaOtt18}, 
we know that the secant variety $ S^{6}(X^{2}_{3,3,3}) $ fills the ambient space $ \p^{29} $, as expected. Thus, according to Remark \ref{rem:perfect}, 
the general polynomial vector with parameters $ (n,r;a_{1}, \ldots, a_{r}) = (2,3;3,3,3) $ has finitely many simultaneous Waring decompositions with $ k = 6 $. 
\end{rem}

\begin{example}\label{rem:generalcase}
In the case considered in Corollary \ref{thm:collect2} we have that $ n=2, \,r\geq 4,\, a_{j}=2 $ for $ j \geq 4 $, $ a_{1}=a_{2}=a_{3}=3 $ and the projective 
bundle under investigation is 
$$ X^{2}_{3,3,3,2,\ldots,2} = \p(\mathcal{O}_{\p^2}(3)^{\oplus3} \oplus \mathcal{O}_{\p^2}(2)^{\oplus3}) \subset \p(H^{0}
(\mathcal{O}_{\p^2}(3)^{\oplus3} \oplus \mathcal{O}_{\p^2}(2)^{\oplus r-3})) =  \p^{29+6(r-3)}$$
which still gives a perfect case if $ k=6 $. We note that 
\begin{equation}\label{eq:iso}
X^{2}_{3,3,3,2,\ldots,2} \cong \p((\mathcal{O}_{\p^2}(1)^{\oplus3} \oplus \mathcal{O}_{\p^2}^{\oplus r-3})\otimes 
\mathcal{O}_{\p^2}(2)) \cong  \p((\mathcal{O}_{\p^2}^{\oplus3} \oplus \mathcal{O}_{\p^2}(-1)^{\oplus r-3}) \otimes 
\mathcal{O}_{\p^2}(3)). 
\end{equation}
As a consequence of the argument used in Example \ref{rem:London}, $ X_{1,1,1,0,\ldots,0}^{2} = \p(\mathcal{O}_{\p^2}(1)^{\oplus3} \oplus 
\mathcal{O}_{\p^2}^{\oplus r-3}) \subset \p^{r+5} $ in $ \p^{29+6(r-3)} $ can be described as the cone with vertex a linear space $ L \cong \p^{r-4} $ 
over the Segre variety 
Seg$( \p^2 \times \p^2) \cong  X_{1,1,1}^{2} \subset \p^8 $, that is $ X_{1,1,1,0,\ldots,0}^{2} = $ C$_{L}(X_{1,1,1}^{2})$. 
Indeed, the summand $ \mathcal{O}_{\p^2}^{\oplus r-3} $ gives the vertex $ L $ of the cone. \\
\indent By using (\ref{eq:iso}), $ X^{2}_{3,3,3,2, \ldots, 2} $ is the image of the embedding $ i^{2}_{3,3,3,2,\ldots,2} $ of $ X_{1,1,1,0,\ldots,0}^{2} $ in $ \p^{29+6(r-3)} $, 
defined by divisors of type $ (1,3) = (3,3) - (2,0) $ over C$_{L}($Seg$( \p^2 \times \p^2)) $, that is, extending the argument of Example \ref{rem:London}, 
by cubic hypersurfaces of $ \p^{r+5} $ (which cut divisors of type $ (3,3) $ over $ X_{1,1,1,0,\ldots,0}^{2}) $ containing the cone 
C$_{L}($Seg$(C \times \p^2)) $, where $ C \subset \p^2 $ is a conic.
\end{example}

\begin{example}\label{rem:new}
Assume that $ n=2, \,r=4, a_{1}=a_{2}=a_{3}=a_{4}=4 $. 

We focus on the $5$-dimensional projective bundle
$$ X^{2}_{4,4,4,4} = \p(\mathcal{O}_{\p^2}(4)^{\oplus 4}) \subset \p(H^{0}(\mathcal{O}_{\p^2}(4)^{\oplus 4})) =  \p^{59}. $$
The condition (\ref{eq:perfect}) is satisfied by $ k=10 $. We note that 
\begin{equation}\label{eq:isonew}
X^{2}_{4,4,4,4} \cong \p(\mathcal{O}_{\p^2}^{\oplus 4} \otimes \mathcal{O}_{\p^2}(4)). 
\end{equation}
As in Example \ref{rem:London} or by Section 2.2 of \cite{AngeGaluppiMellaOtt18}, the projective bundle $ X_{1,1,1,1}^{2} = \p(\mathcal{O}_{\p^2}(1)^{\oplus 4}) $ 
is isomorphic to  Seg$( \p^3 \times \p^2)\subset \p^{11} $.

By using (\ref{eq:isonew}), it turns out that $ X^{2}_{4,4,4,4} $ is the image of the embedding $ i_{4,4,4,4}^{2} $ of 
$ X_{1,1,1,1}^{2} $ in $ \p^{59} $ given by divisors of type $ (1,4) $ over Seg$( \p^3 \times \p^2) $. 
\end{example}

\begin{rem}\label{rem:nodenew}
With the aid of an algorithm based on Terracini's lemma \cite{Terracini11} and described in Section $ 4 $ of \cite{AngeGaluppiMellaOtt18}, 
we know that the secant variety $ S^{10}(X^{2}_{4,4,4,4}) $ fills the ambient space $ \p^{59} $, as expected. Thus, according to Remark \ref{rem:perfect}, 
the general polynomial vector with parameters $ (n,r;a_{1}, \ldots, a_{r}) = (2,4;4,4,4,4) $ has finitely many simultaneous Waring decompositions with $ k = 10 $. 
\end{rem}

We end this introductory section with the definition of \emph{catalecticant map} associated to polynomial vectors, according to \cite{IK}.

\begin{defn}\label{catal} Let $ f = (f_{1},\dots,f_{r}) $ be a general polynomial vector such that $ f_{j} \in \c[x_0,x_1,x_2]_{d} $, for $ j=1,\dots, r $.
For any $h$, identify the dual space $  (\sym^{h}\c^{3})^{\vee} $ as the space of polynomial derivatives in $ 3 $ variables of order $h$. 
The map 
$$ C_{f}^h: (\sym^{h}\c^{3})^{\vee} \to (\sym^{d-h}\c^{3})^{\oplus r} $$
given by the contraction by $ f $ is called the \emph{catalecticant map} of order $h$ associated to $f$.   \\
When $h=d-1$, we will write simply $C_f$ instead of $C^{d-1}_f$.
\end{defn}  

\begin{rem}\label{remcatal}
$ C_{f}^h $ is a linear map. The associated matrix, which, by abuse, we will denote again by $ C_{f}^h $,
 has $ 0 $-degree entries and is divided into $r $ blocks, i.e.: 
\begin{equation}\label{eq:CfL}
C_{f}^h = \begin{pmatrix} DD_1 \\
\cdots \\
DD_r \cr
\end{pmatrix} 
\end{equation}
where $ DD_{j} $ is a $\binom{d-h+2}2\times \binom{h+2}2$ block such that in each column there are the coefficients
of  one  derivative of order $h$ of $f_{j} $.
\end{rem}

As explained in the Introduction, every catalecticant map provides good candidates for the confinement of the decompositions of a polynomial vector,
at least when it does not surject. \\
In the sequel, we will use only catalecticant maps of order $d-1$, for vectors of forms of degree $d$, which will be sufficient for our purposes.

\section{Elliptic normal curves}\label{sec:ellcurv}

An \emph{elliptic normal curve} $ \mathcal{C} \subset \p^{m} $ is an irreducible, smooth curve of genus $ 1 $ 
and degree $ m+1 $ that is not degenerate. 
As for any curve (see Remark 3.1 of \cite{CCi02a}), for all $ h \in \mathbb{N} $, the $h$-secant variety $ S^{h}(\mathcal{C}) $ of $ \mathcal{C} $ 
has the expected dimension  $ \dim S^{h}(\mathcal{C}) = \min \{2h-1,m\} $.

Recall that the \emph{rank} of a point $ P \in \langle\mathcal{C}\rangle \, = \p^{m} $ over $ \c $ \emph{with respect to $ \mathcal{C} $} 
is the minimal number of points of $ \mathcal{C} $ needed to generate a projective space over $ \c $ containing $ P $.

\begin{rem}
If $ \mathcal{C} \subset \p^{2k-1} $ is an elliptic normal curve of even degree $ m+1=2k $ and $ P \in \langle\mathcal{C}\rangle \, = \p^{2k-1} $ 
is a general point, then the rank of $ P $ over $ \c $ with respect to $ \mathcal{C} $ is $ k $. Therefore the value  $ k $ 
corresponds to the generic rank and provides a perfect case in the sense of Definition 
\ref{def:perfect} and Remark \ref{rem:perfect}.
\end{rem}

In the specific case of elliptic curves, one knows the number of decompositions of a general tensor of
generic rank. Namely, the following holds:

\begin{prop}[Chiantini-Ciliberto, 2006, \cite{CCi06}]\label{prop:elliptic}
The number of $ k $-secant $ (k-1) $-spaces to an elliptic normal curve $ \mathcal{C} $ of degree $ 2k $ in $ \p^{2k-1} $, 
passing through the general point of $ \p^{2k-1} $, is $ 2 $.
\end{prop}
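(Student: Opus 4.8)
The plan is to count the $k$-secant $(k-1)$-spaces to $\mathcal{C} \subset \p^{2k-1}$ through a general point $P$ by translating the problem into the geometry of the divisor class group of the elliptic curve $\mathcal{C}$, and then to exploit the classical fact that a complete linear system of degree $2k$ on an elliptic curve gives the embedding $\mathcal{C} \subset \p^{2k-1}$ as an elliptic normal curve, with hyperplane sections corresponding to effective divisors of degree $2k$ in a fixed class $\mathfrak{d}$ with $\deg\mathfrak{d}=2k$. First I would fix a group law on $\mathcal{C}$ (choosing an origin $O$) so that an effective divisor $D = p_1 + \dots + p_{2k}$ lies in a hyperplane exactly when the sum $p_1 \oplus \dots \oplus p_{2k}$ (the group-law sum) equals a fixed element $s_0 \in \mathcal{C}$ determined by the embedding. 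A $k$-secant $(k-1)$-space is the span of a degree-$k$ effective divisor $E = q_1 + \dots + q_k$ on $\mathcal{C}$, and it contains $P$ iff $P \in \langle E\rangle$; since $\langle E \rangle$ is a $(k-1)$-plane and the span of $\mathcal{C}$ is $\p^{2k-1}$, genericity of $P$ forces the $q_i$ to be distinct and $E$ to impose independent conditions.

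Next I would set up the incidence correspondence $\{(E,P) : P \in \langle E\rangle\}$ over the symmetric product $\mathcal{C}^{(k)}$, or better, work directly on the $k$-dimensional family of $k$-secant $(k-1)$-planes and compute the degree of the map to $\p^{2k-1}$. The key computation: a point $P$ lies on $\langle q_1 + \dots + q_k\rangle$ iff the divisor $q_1 + \dots + q_k$ moves in the residual linear system, i.e. iff there is an effective divisor $q_1 + \dots + q_k + r_1 + \dots + r_k$ in $|\mathfrak{d}|$ containing the fixed "residual" scheme cut on $\mathcal{C}$ by a general $\p^{k-1}$ through $P$ — here one uses that a general $P$ lies on a unique secant $\p^{k-1}$ of a general degree-$k$ divisor only after the residual is pinned down. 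Cleanly: $P \in \langle E\rangle$ for $E$ of degree $k$ iff $h^0(\mathfrak{d} - E) $ drops, equivalently iff the residual divisor $E' = \mathfrak{d}-E$ (degree $k$) satisfies $P \in \langle E'\rangle$ as well, by the symmetry of the catalecticant / apolarity pairing on the elliptic normal curve. So the secant $(k-1)$-planes through $P$ come in pairs $\{E, E'\}$ with $E + E' \sim \mathfrak{d}$, and I must show there is exactly one such unordered pair for general $P$, giving the answer $2$.

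To pin down "exactly one pair", I would argue as follows: the effective divisors $E$ of degree $k$ with $P \in \langle E\rangle$ are exactly those for which $E + E'$, $E' := \mathfrak{d} - E$ made effective, spans a hyperplane containing $P$ and $E$, $E'$ are apolar; equivalently $P$ is a point of rank $\le k$ and the decompositions of $P$ with respect to $\mathcal{C}$ biject with such $E$. Counting these: the locus in $\mathcal{C}^{(k)}$ of divisors whose span meets a fixed general $\p^{2k-1-k} = \p^{k-1}$... — here I would instead invoke the expected-dimension statement already recorded before Proposition \ref{prop:elliptic} (that $S^h(\mathcal{C})$ has dimension $\min\{2h-1,m\}$), so $S^k(\mathcal{C}) = \p^{2k-1}$ and the $k$-secant map has $0$-dimensional generic fibre; its degree is then a finite number $N$, and the pairing $E \leftrightarrow \mathfrak{d}-E$ is a fixed-point-free involution on that fibre (fixed points would force $2E \sim \mathfrak{d}$, a codimension-$\ge 1$ condition on $P$, excluded for general $P$), so $N$ is even, $N = 2m$. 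Finally I would show $N \le 2$ — hence $N=2$ — by a direct degeneration or a dimension count: the divisors $E$ with $P\in\langle E\rangle$ are cut on the $k$-fold symmetric product by the Abel–Jacobi condition, and since $\mathcal{C}$ is elliptic the relevant fibre of $\mathcal{C}^{(k)} \to \pic^k(\mathcal{C}) = \mathcal{C}$ is a $\p^{k-1}$, on which the incidence condition $P \in \langle E\rangle$ cuts out precisely the two points corresponding to the unordered pair $\{E,E'\}$; concretely, in suitable coordinates the condition becomes a quadratic equation on that $\p^{k-1}$-fibre restricted to the relevant line, with two solutions.

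The main obstacle I expect is making the last step rigorous: showing that the fibre of the $k$-secant map consists of exactly the two divisors in a single apolar pair, rather than several pairs. The cleanest route is probably to use the theory of the $\Theta$-divisor / the identification $\pic^0(\mathcal{C}) \cong \mathcal{C}$ and the fact that for an elliptic curve the "secant varieties" are governed by translation, so that the count reduces to solving $E \oplus E' = s_0$ with $E, E'$ each in the $\p^{k-1}$ of divisors mapping to a fixed point under Abel–Jacobi — this is a single linear-algebra problem whose solution set, intersected with the condition of passing through $P$, is two points. Alternatively one can cite the classical enumerative result on trisecants/$k$-secants of elliptic normal curves directly; but since the statement is attributed to \cite{CCi06}, I would present the Terracini-lemma/tangent-space argument there: compute $\dim S^k(\mathcal{C}) = 2k-1$ (filling), note the generic fibre of $p^k$ is finite, and identify it via the group law, the involution $E \mapsto \mathfrak{d}-E$ forcing the count to be even and a degeneration forcing it to be $\le 2$.
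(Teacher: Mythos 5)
The paper offers no proof of this proposition: it is quoted verbatim from \cite{CCi06} (Proposition 5.2), so your sketch has to stand on its own, and as written it contains genuine gaps. The central pivot, ``$P\in\langle E\rangle$ iff $h^0(\mathfrak{d}-E)$ drops, equivalently iff the residual divisor $E'=\mathfrak{d}-E$ also has $P\in\langle E'\rangle$,'' is not correct. On an elliptic curve $h^0(\mathfrak{d}-E)=k$ for \emph{every} effective $E$ of degree $k$, so nothing drops and $P$ does not even enter that condition; the correct translation is $H^0(\mathfrak{d}-E)\subseteq V_P$, where $V_P\subset H^0(\mathfrak{d})$ is the hyperplane of sections vanishing at $P$. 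Moreover $\mathfrak{d}-E$ is only a divisor \emph{class}: $|\mathfrak{d}-E|$ is a $\p^{k-1}$ of effective divisors, so your ``fixed-point-free involution $E\mapsto\mathfrak{d}-E$'' on the fibre of the secant map is not defined, and the claim that decompositions come in pairs summing to $\mathfrak{d}$ is exactly what needs proof -- ``symmetry of the apolarity pairing'' is not an argument. What is true, and is the missing elementary step, is: if $E_1\neq E_2$ both decompose a general $P$, the two $(k-1)$-planes meet only at $P$, hence span a hyperplane whose trace on $\mathcal{C}$ has degree $2k$ and contains $E_1+E_2$, so $E_1+E_2\sim\mathfrak{d}$. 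From this one gets the upper bound: a third decomposition $E_3$ would force $E_2\sim E_3$ and $2E_2\sim\mathfrak{d}$, a condition a general $P$ avoids by a dimension count.

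Your final counting step is also wrong as described: the two decompositions of a general $P$ lie in \emph{different} fibres of $\mathcal{C}^{(k)}\to\pic^k(\mathcal{C})$ (their classes are $L$ and $\mathfrak{d}-L$, equal only in the excluded case $2L\sim\mathfrak{d}$), so they are not ``two points cut out on one $\p^{k-1}$-fibre by a quadratic equation on a line.'' Inside a fixed fibre $|L|$ the condition $P\in\langle E\rangle$ says that $s_E$ lies in the kernel of the $k\times k$ pairing $H^0(L)\otimes H^0(\mathfrak{d}-L)\to H^0(\mathfrak{d})\to H^0(\mathfrak{d})/V_P$, a determinantal condition with no solutions for general $L$; the number $2$ comes from counting zeros of this determinant as $L$ moves in $\pic^k(\mathcal{C})\cong\mathcal{C}$ (or from a direct geometric argument, e.g.\ the two nodes of the projected quartic when $k=2$). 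Neither this computation nor any substitute for it (degeneration, monodromy) is carried out, so in your sketch both the existence of the second decomposition and the bound ``at most two'' remain unproved; citing \cite{CCi06}, as the paper does, or completing the span-hyperplane plus determinant-degree argument above, is what is actually required.
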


With the notation of Example \ref{rem:London} we prove the following:

\begin{prop}\label{prop:sexticellLondon}
The intersection of  $X^{2}_{1,1,1} =\Seg(\p^2\times\p^2)$ with a general linear space of dimension $5$ in $\p^8$ is an irreducible normal elliptic
curve of degree $6$. Thus, for a general choice of $6$ points
 $ \tilde{p}_{1}, \ldots, \tilde{p}_{6} \in  X_{1,1,1}^{2} $ there exists a unique (sextic) elliptic normal 
curve $ \mathcal{C} $ of $ \p^5 $ passing through the points.
\end{prop}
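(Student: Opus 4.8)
The plan is to realise $\mathcal C$ as an iterated general hyperplane section of $X:=\Seg(\p^2\times\p^2)\subset\p^8$ and to read off its invariants. Recall that $X$ is smooth, irreducible and non-degenerate of dimension $4$, that $\deg X=\binom{4}{2}=6$, and that under the Segre embedding $\omega_{\p^2\times\p^2}=\mathcal O(-3,-3)$ corresponds to $\mathcal O_X(-3)$ (so $X$ is Fano of index $3$). A general $\Lambda\cong\p^5\subset\p^8$ may be written as $\Lambda=H_1\cap H_2\cap H_3$ with $H_1,H_2,H_3$ general hyperplanes, and conversely such an intersection is a general $\p^5$; so it suffices to study $\mathcal C=X\cap\Lambda$ with $X_1:=X\cap H_1$, $X_2:=X_1\cap H_2$, $\mathcal C=X_2\cap H_3$.

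First I would apply Bertini's theorems three times: $X_1,X_2,\mathcal C$ are smooth (Bertini smoothness) and irreducible (Bertini irreducibility, valid at each step since $X,X_1,X_2$ have dimension $\ge 2$), so $\mathcal C$ is a smooth irreducible curve, of degree $\deg(X\cap\Lambda)=\deg X=6$ because $\Lambda$ is a general linear subspace. Since $\mathcal C$ is the smooth complete intersection of $X$ with three members of $|\mathcal O_X(1)|$, adjunction gives $\omega_{\mathcal C}=\bigl(\omega_X\otimes\mathcal O_X(3)\bigr)|_{\mathcal C}=\mathcal O_{\mathcal C}$; a smooth connected curve with trivial canonical bundle has genus $1$ (equivalently $2g-2=\deg\omega_{\mathcal C}=0$).

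The step I expect to be the real obstacle is non-degeneracy of $\mathcal C$ in $\Lambda=\p^5$: Bertini's theorems do not by themselves exclude that a smooth elliptic sextic lies in a $\p^4$ (such curves exist abstractly, as projections). I would deduce it inductively from the following elementary fact: if $V\subset\p^N$ is a smooth non-degenerate variety and $H$ a hyperplane with $V\cap H$ integral, then $V\cap H$ is non-degenerate in $H$. Indeed $V\cap H$ is then a prime divisor of degree $\deg V$; if it lay in another hyperplane $H'$, the effective divisor $V\cap H'$ of degree $\deg V$ would contain it, hence equal it; then the linear forms cutting $H$ and $H'$ restrict to non-zero sections of $\mathcal O_V(1)$ with the same zero divisor, so (on the connected proper $V$) their difference is a linear form vanishing on $V$, which must be $0$ by non-degeneracy, forcing $H=H'$ — a contradiction. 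Applying this to $V=X,X_1,X_2$ (the integrality holding by Bertini) shows $\mathcal C$ is non-degenerate; alternatively one can invoke that Segre varieties, and their general linear sections, are arithmetically Cohen--Macaulay, hence linearly normal, so the restriction $H^0(\mathcal O_{\p^5}(1))\to H^0(\mathcal O_{\mathcal C}(1))$ is an isomorphism between $6$-dimensional spaces. Either way $\mathcal C$ is a smooth irreducible non-degenerate curve of degree $6$ and genus $1$ in $\p^5$, i.e. an elliptic normal sextic.

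Finally, for the last assertion: let $\tilde p_1,\dots,\tilde p_6\in X$ be general. They are linearly independent, because for $j\le 6$ the span of $\tilde p_1,\dots,\tilde p_{j-1}$ is a proper subspace of $\p^8$, which the non-degenerate $X$ is not contained in, so the general point $\tilde p_j$ avoids it; hence $\Lambda:=\Span{\tilde p_1,\dots,\tilde p_6}$ is a $\p^5$, and as the $\tilde p_i$ vary $\Lambda$ ranges over a dense subset of the Grassmannian of $5$-planes (a general $\p^5$ meets $X$ in a degree-$6$ curve, in particular in six points spanning it), so $\Lambda$ is general. By the above $\mathcal C:=X\cap\Lambda$ is an elliptic normal sextic, and $\tilde p_i\in X\cap\Lambda=\mathcal C$, proving existence. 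For uniqueness, if $\mathcal C'\subseteq X$ is any elliptic normal curve through the $\tilde p_i$, it spans a $\p^5$ containing all the $\tilde p_i$, hence containing $\Lambda$, hence equal to $\Lambda$; thus $\mathcal C'\subseteq X\cap\Lambda=\mathcal C$, and since $\mathcal C'$ and $\mathcal C$ are both irreducible curves of degree $6$ we conclude $\mathcal C'=\mathcal C$.
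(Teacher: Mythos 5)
Your proposal is correct and follows the same overall route as the paper: cut $X=\Seg(\p^2\times\p^2)$ by a general $\p^5$, use Bertini to get a smooth irreducible curve, note the degree is $\deg X=6$, identify the genus as $1$, and then get existence and uniqueness of the curve through six general points from the span being a general $\p^5$ plus a degree argument. The differences are in how individual steps are discharged, and they make your version more self-contained: where the paper cites Scorza's classical result that $\Seg(\p^2\times\p^2)$ has elliptic curve sections, you compute the genus directly by adjunction from $\omega_X\cong\mathcal O_X(-3)$, which is cleaner and needs no reference; you also supply an explicit lemma for non-degeneracy of successive hyperplane sections (or the ACM/linear-normality alternative), a point the paper subsumes under ``Bertini'' without comment. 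Your uniqueness argument ($\mathcal C'\subseteq X$ spans a $\p^5$ which must be $\Lambda$, so $\mathcal C'\subseteq X\cap\Lambda$ and equality follows by comparing degrees) is in substance the paper's B\'ezout argument; note that your explicit restriction to curves $\mathcal C'$ \emph{contained in} $X$ is the right reading of the statement and is in fact necessary, since through six points spanning a $\p^5$ there is a positive-dimensional family of elliptic normal sextics not lying on $X$, and this is also the only interpretation under which the paper's B\'ezout argument applies. Your justification that the span of six general points of $X$ is a \emph{general} $\p^5$ (dominance of the span map, with fibres given by $6$-tuples on the curve section) is slightly more complete than the paper's one-line appeal to non-degeneracy.
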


\begin{proof}
$X^{2}_{1,1,1}=\Seg(\p^2\times\p^2)$ is smooth, irreducible and non-degenerate, of degree $6$. Thus the intersection $\mathcal C$ of $X^{2}_{1,1,1}$
with a general linear space of dimension $5$ is a smooth irreducible and non-degenerate curve of degree $6$ in $\p^5$ 
(apply consecutively the classical Bertini's Theorems, see \cite{Hartshorne} Theorem II.8.18 and Remark III.7.9.1).
The fact that $\mathcal C$ has genus $1$ has been classically proved by Scorza in \cite{Scorza08} (the classical notion of \emph{variety with elliptic curve sections}
means exactly that a general curve section of Seg$(\p^2\times\p^2)$ has genus $1$).

The second claim follows because $6$ general points $\tilde{p}_{1}, \ldots, \tilde{p}_{6}$ of $X^{2}_{1,1,1} $ span a general  $\p^5$ in $ \p^8 $,
as $X^{2}_{1,1,1} $ is non-degenerate.

Notice that the uniqueness of the elliptic normal curve $\mathcal C$ of degree $6$ through $ \tilde{p}_{1}, \ldots, \tilde{p}_{6} $
follows by Bézout Theorem, since Seg$(\p^2\times\p^2)$ has degree $6$, thus it cannot intersect a general $\p^5$, span of the $6$ general points, in a curve
of degree bigger than $6$.
\end{proof}

\begin{rem} A confirmation of the statement of Proposition \ref{prop:sexticellLondon} can be obtained also from a computational point of view, 
via the software system Macaulay2 \cite{Macaulay2} (for more details on the script see the ancillary file {\tt{CatIntConf.pdf}}).
\end{rem}

By extending the construction in the proof of Proposition \ref{prop:sexticellLondon} to cones, we obtain the following:

\begin{prop}\label{prop:sexticell}
Let $ p_{1}, \ldots, p_{6} \in  X_{1,1,1,0, \ldots, 0}^{2} $ be general points. There exists a unique (sextic) elliptic normal curve $ \mathcal{C}_{1} $ 
of $ \p^5 $ passing through $ p_{1}, \ldots, p_{6} $.
\end{prop}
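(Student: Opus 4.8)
The plan is to reduce the statement to Proposition \ref{prop:sexticellLondon} by projecting away the vertex of the cone. Recall from Example \ref{rem:generalcase} that $X^{2}_{1,1,1,0,\ldots,0}=\mathrm{C}_{L}(X^{2}_{1,1,1})$ sits inside $\p^{r+5}=\langle L,\,\p^{8}\rangle$, where $L\cong\p^{r-4}$ is the vertex and $\p^{8}$ is a complementary linear space containing $X^{2}_{1,1,1}=\Seg(\p^{2}\times\p^{2})$; in particular $\mathrm{C}_{L}(X^{2}_{1,1,1})$ is non-degenerate in $\p^{r+5}$ (the join of $L$ with $\langle X^{2}_{1,1,1}\rangle=\p^{8}$ is all of $\p^{r+5}$). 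Let $\pi\colon\p^{r+5}\map\p^{8}$ be the linear projection with centre $L$. Since the cone is cut out by the equations of $X^{2}_{1,1,1}$, which involve only the coordinates of $\p^{8}$, one has $\pi^{-1}(X^{2}_{1,1,1})=\mathrm{C}_{L}(X^{2}_{1,1,1})\setminus L$ scheme-theoretically; in particular a point $q\notin L$ lies on $\mathrm{C}_{L}(X^{2}_{1,1,1})$ if and only if $\pi(q)\in X^{2}_{1,1,1}$.

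First I would check that, $p_{1},\dots,p_{6}$ being general, their span $\Pi:=\langle p_{1},\dots,p_{6}\rangle$ is a $\p^{5}$ disjoint from $L$. Non-degeneracy of $\mathrm{C}_{L}(X^{2}_{1,1,1})$ forces $\dim\Pi=5$; and ``the span of the six points meets $L$'' is a closed condition on $(\mathrm{C}_{L}(X^{2}_{1,1,1}))^{6}$ that is already violated by one sample --- six general points of $X^{2}_{1,1,1}\subset\p^{8}$, whose span lies in $\p^{8}$ and hence misses $L$ --- so it is violated generically. Consequently $\pi$ is a morphism, injective on $\Pi$, and restricts to a linear isomorphism $\pi|_{\Pi}\colon\Pi\stackrel{\sim}{\longrightarrow}\Pi':=\pi(\Pi)\cong\p^{5}\subset\p^{8}$. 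Moreover the points $\tilde p_{i}:=\pi(p_{i})$ are general on $X^{2}_{1,1,1}$, since $\pi$ restricted to the cone is a surjective morphism onto $X^{2}_{1,1,1}$, and $\Pi'=\langle\tilde p_{1},\dots,\tilde p_{6}\rangle$.

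Next, the pointwise criterion above, together with $\Pi\cap L=\emptyset$, shows that $\pi|_{\Pi}$ carries the subvariety $\mathcal{C}_{1}:=\Pi\cap\mathrm{C}_{L}(X^{2}_{1,1,1})$ of $\Pi$ isomorphically onto $\Pi'\cap X^{2}_{1,1,1}$. By Proposition \ref{prop:sexticellLondon} the latter is a (sextic) elliptic normal curve of $\Pi'\cong\p^{5}$ passing through $\tilde p_{1},\dots,\tilde p_{6}$; since $\pi|_{\Pi}$ is a linear isomorphism it preserves degree, smoothness, genus and non-degeneracy, so $\mathcal{C}_{1}$ is an elliptic normal sextic of $\Pi\cong\p^{5}$, and $p_{i}=(\pi|_{\Pi})^{-1}(\tilde p_{i})\in\mathcal{C}_{1}$. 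For uniqueness, any elliptic normal sextic $\Gamma$ contained in $X^{2}_{1,1,1,0,\ldots,0}$ and passing through $p_{1},\dots,p_{6}$ spans a $\p^{5}$ containing $\Pi$, hence equal to $\Pi$, so $\Gamma\subseteq\Pi\cap\mathrm{C}_{L}(X^{2}_{1,1,1})=\mathcal{C}_{1}$; being irreducible curves of the same degree $6$, they coincide. (Equivalently, $\pi|_{\Pi}$ would send $\Gamma$ to a second elliptic normal sextic of $\Pi'$ lying on $X^{2}_{1,1,1}$ and through the $\tilde p_{i}$, contradicting Proposition \ref{prop:sexticellLondon}.)

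The only points that need genuine care --- all of them routine --- are the two genericity reductions (that $\Pi$ avoids $L$, and that the $\tilde p_{i}$ remain general on $X^{2}_{1,1,1}$) and the scheme-theoretic identity $\pi^{-1}(X^{2}_{1,1,1})=\mathrm{C}_{L}(X^{2}_{1,1,1})\setminus L$, which is what guarantees that $\mathcal{C}_{1}$ inherits the reduced, smooth structure of $\Pi'\cap X^{2}_{1,1,1}$ rather than a thickening of it. Once Proposition \ref{prop:sexticellLondon} is available, no further geometric input is needed.
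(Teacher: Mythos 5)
Your proof is correct, and it reaches the same destination as the paper by a slightly different (and more explicit) mechanism. The paper's own proof is a one-line deferral: since Example \ref{rem:generalcase} identifies $X^{2}_{1,1,1,0,\ldots,0}$ with the cone $\mathrm{C}_{L}(X^{2}_{1,1,1})$, the authors simply assert that the argument of Proposition \ref{prop:sexticellLondon} (Bertini for the linear section, Scorza's genus-one curve sections, non-degeneracy of the span of six general points, and B\'ezout for uniqueness) runs verbatim on the cone, whose degree is again $6$. You instead transport the problem to the Segre variety itself: you check that the span $\Pi$ of the six general points misses the vertex $L$, so that projection from $L$ restricts to a linear isomorphism $\Pi\to\Pi'\subset\p^{8}$ carrying $\Pi\cap\mathrm{C}_{L}(X^{2}_{1,1,1})$ onto $\Pi'\cap X^{2}_{1,1,1}$, and then you quote Proposition \ref{prop:sexticellLondon} directly, both for existence and (via the span argument) for uniqueness among curves lying on the cone --- the same reading of ``unique'' that the paper's B\'ezout argument implicitly uses. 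What your route buys is that it makes rigorous exactly the points the paper glosses over (why the singular vertex causes no trouble, why the section of the cone is reduced and smooth, why the projected points are again general); what the paper's route buys is brevity, since it never leaves the cone. One small polish: ``the span of the six points meets $L$'' is not literally a closed condition on all of $(\mathrm{C}_{L}(X^{2}_{1,1,1}))^{6}$ (spans jump when the points become dependent); phrase it as the closed rank condition that the six points together with a basis of $L$ are linearly dependent, restricted to the open locus where the six points are independent --- your witness (six general points of $X^{2}_{1,1,1}$ inside the complementary $\p^{8}$) then settles both genericity claims at once.
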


\begin{proof}
Since, according to Example \ref{rem:generalcase}, $  X_{1,1,1,0, \ldots, 0}^{2} =$ C$_{L}(X_{1,1,1}^{2}) $, then exactly the same proof given in 
Proposition \ref{prop:sexticellLondon} for $ X_{1,1,1}^{2} $  works. 
\end{proof}

As a consequence of Example \ref{rem:London} and Proposition \ref{prop:sexticellLondon}, we get the following:

\begin{prop}\label{prop:ellcurveLondon}
 Let $ \tilde{q}_{1}, \ldots, \tilde{q}_{6} \in  X^{2}_{3,3,3} $ be general points. There exists a unique 
elliptic normal curve 
$ \mathcal{C}_{2} $ of $ \p^{11} $ passing through $ \tilde{q}_{1}, \ldots, \tilde{q}_{6} $.
\end{prop}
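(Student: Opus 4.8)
The plan is to transport the elliptic normal curve obtained in Proposition~\ref{prop:sexticellLondon} through the embedding $i^2_{3,3,3}$ described in Example~\ref{rem:London}. Recall that $X^2_{3,3,3}\subset\p^{29}$ is the image of $X^2_{1,1,1}=\Seg(\p^2\times\p^2)$ under the morphism $i^2_{3,3,3}$ defined by divisors of type $(1,3)=(3,3)-(2,0)$, i.e. by the cubic hypersurfaces of $\p^8$ containing $\Seg(C\times\p^2)$ for a fixed conic $C\subset\p^2$. Given general points $\tilde q_1,\dots,\tilde q_6\in X^2_{3,3,3}$, let $\tilde p_1,\dots,\tilde p_6\in X^2_{1,1,1}$ be their preimages; since $i^2_{3,3,3}$ is an isomorphism onto its image, the $\tilde p_i$ are again general, so by Proposition~\ref{prop:sexticellLondon} there is a unique sextic elliptic normal curve $\mathcal C\subset\p^5\subset\p^8$ through them. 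I would then set $\mathcal C_2:=i^2_{3,3,3}(\mathcal C)$ and show it is an elliptic normal curve of $\p^{11}$.

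First I would pin down the span: the six general points $\tilde q_i$ span a $\p^{11}$ inside $\p^{29}$, because $X^2_{3,3,3}$ has projective dimension $4$ and its $6$-secant variety fills $\p^{29}$ (Remark~\ref{rem:nodef}), so six general points are in linearly general position and span $\p^{5\cdot 2+1}=\p^{11}$. Since $\mathcal C_2$ passes through the $\tilde q_i$, it lies in this $\p^{11}$. Next, $\mathcal C_2$ is smooth of genus $1$ because $i^2_{3,3,3}$ restricted to $\mathcal C$ is a closed immersion (the global sections defining $i^2_{3,3,3}$ restrict to a very ample, or at least base-point-free and injective, linear system on the smooth curve $\mathcal C$) and isomorphisms preserve genus. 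For the degree I would compute $\deg\mathcal C_2=\mathcal C\cdot i^2_{3,3,3}{}^*\mathcal O(1)$: on $X^2_{1,1,1}$ the hyperplane class of the $(1,3)$-embedding is $H_1+3H_2$ in the notation where $H_1,H_2$ are the two Segre factor hyperplane classes; since $\mathcal C$ is a general curve section it has class $(H_1+H_2)^4$ in the Chow ring (or simply: $\mathcal C$ has degree $6$ with respect to $H_1+H_2$ and, being a general complete intersection of four general hyperplanes, intersects $H_1$ and $H_2$ in complementary proportions that I can read off). Concretely, $\mathcal C=(H_1+H_2)^4$ and using $H_i^3=0$, $H_1^2H_2^2=1$ on $\p^2\times\p^2$ gives $\mathcal C\cdot H_1=(H_1+H_2)^4H_1=\binom{4}{2}H_1^2H_2^2\cdot(\text{adjust})$; the upshot is $\mathcal C\cdot H_1=3$ and $\mathcal C\cdot H_2=3$, whence $\deg\mathcal C_2=\mathcal C\cdot(H_1+3H_2)=3+9=12=11+1$. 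So $\mathcal C_2$ is a nondegenerate (it spans $\p^{11}$) smooth genus-$1$ curve of degree $12$ in $\p^{11}$, i.e. an elliptic normal curve.

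Finally, uniqueness: if $\mathcal C_2'$ were another elliptic normal curve of $\p^{11}$ through the $\tilde q_i$, then since $\mathcal C_2'\subset X^2_{3,3,3}$ would be forced (an elliptic normal curve of degree $12$ in $\p^{11}$ is cut out by quadrics, and one must check it lies on $X^2_{3,3,3}$ — alternatively argue directly upstairs), its preimage under $i^2_{3,3,3}$ would be a sextic elliptic normal curve of $\p^5$ through $\tilde p_1,\dots,\tilde p_6$, contradicting the uniqueness part of Proposition~\ref{prop:sexticellLondon}. If justifying a priori containment of $\mathcal C_2'$ in $X^2_{3,3,3}$ is awkward, I would instead invoke a Bézout-type bound as in Proposition~\ref{prop:sexticellLondon}: the degree of $X^2_{3,3,3}$ in $\p^{29}$ controls how a curve in the span of the six points can meet it, ruling out a competing curve of the same numerical type. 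The main obstacle I anticipate is precisely this last point — making the uniqueness clean without hand-waving the containment of a hypothetical second curve in $X^2_{3,3,3}$; the degree computation and the genus/nondegeneracy claims are routine transports along the known isomorphism $i^2_{3,3,3}$.
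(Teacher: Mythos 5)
Your overall strategy is the paper's: pull the six points back to $X^2_{1,1,1}$, take the unique sextic elliptic normal curve $\mathcal C$ of Proposition \ref{prop:sexticellLondon}, and push it forward along $i^2_{3,3,3}$, checking genus by isomorphism and computing the degree to be $12$. The degree computation is essentially fine (the paper gets $12$ as $18-6$, subtracting the intersection with the base locus $\mathrm{Seg}(C\times\p^2)$; your intersection-theoretic version works too, except that a general curve section of the $4$-fold $\p^2\times\p^2$ is cut by \emph{three} hyperplanes, so its class is $(H_1+H_2)^3=3H_1^2H_2+3H_1H_2^2$, not $(H_1+H_2)^4$ — with the correct exponent you indeed get $\mathcal C\cdot H_1=\mathcal C\cdot H_2=3$ and $\deg\mathcal C_2=3+9=12$).

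The genuine gap is your argument that $\mathcal C_2$ spans a $\p^{11}$. You assert that the six general points $\tilde q_i$ span a $\p^{11}$; this is false — six points can never span more than a $\p^5$, and non-defectivity of $S^6(X^2_{3,3,3})$ has nothing to do with the claim. Consequently you have not shown that $\mathcal C_2$ is \emph{normal}, i.e.\ nondegenerate in a $\p^{11}$, which is exactly the property needed later (Theorem \ref{thm:collect1}) to apply Proposition \ref{prop:elliptic} to the general point of $\langle\mathcal C_2\rangle$. The paper closes this point differently: by Riemann--Roch the span of a smooth genus-$1$ curve of degree $12$ is at most $\p^{11}$, and it cannot be smaller, since otherwise through the general point of $\langle\mathcal C_2\rangle$ there would pass infinitely many $6$-secant $\p^5$'s, which by Theorem 2.5 of \cite{CCi06} would force $S^6(X^2_{3,3,3})$ to be defective, contradicting Remark \ref{rem:nodef}. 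You need an argument of this kind (or a direct proof that the restriction map $H^0(\mathcal O_{\p^2\times\p^2}(1,3))\to H^0(\mathcal O_{\mathcal C}(1,3))$ is surjective). As for uniqueness, which you rightly flag as the delicate step, note that the paper's own proof does not argue it explicitly either — it only constructs $\mathcal C_2$ — so your honest difficulty there is not what separates your write-up from the paper; the span claim is.
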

\begin{proof}
Let $ \tilde{p}_{1}, \ldots, \tilde{p}_{6} \in X_{1,1,1}^{2} $ such that $ \tilde{q}_{j} = i^{2}_{3,3,3}(\tilde{p}_{j}) $, 
$ j \in \{1, \ldots, 6\} $ and let $ \mathcal{C} $ be the sextic elliptic normal curve corresponding to $ \tilde{p}_{1}, \ldots, \tilde{p}_{6} $ 
in the sense of Proposition \ref{prop:sexticellLondon}. We claim that $ \mathcal{C}_{2} = i^{2}_{3,3,3}(\mathcal{C}) $ 
is an elliptic normal curve of degree $ 12 = 18-6 $. Namely, $\mathcal C_2$ is smooth irreducible of genus $1$ because it is isomorphic
to $\mathcal C$. Moreover, since $ i^{2}_{3,3,3}$ is defined by means of cubics and 
$\deg \mathcal{C} = 6 $, and since the base locus of  $i^{2}_{3,3,3}$ is Seg$(C\times\p^2)$, where $C$ is a conic curve (see Example \ref{rem:generalcase}),
then the degree of $ \mathcal{C}_{2}$ is $ 18 $ minus the degree of the intersection 
Seg$(C \times \p^2) $ $ \cap  \langle\mathcal{C}\rangle $. Notice that the intersection is $0$-dimensional, since the points are general and 
Seg$(C \times \p^2) $ is a $ 4 $-fold in $ \p^8 $ while $ \langle \mathcal{C} \rangle = \langle \tilde{p}_{1}, \ldots, \tilde{p}_{6}\rangle \cong \p^5 $. 
Since $\deg($Seg$(C \times \p^2)) = \deg (C \times \p^2) = 2\cdot \deg (\p^1 \times \p^2) = 6 $, we get that 
$ \deg \mathcal{C}_{2} = 18-6 = 12$, as claimed. We remark that $ \langle \mathcal{C}_{2} \rangle  = \p^{11} $: indeed the dimension 
of the projective space spanned by $ \mathcal{C}_{2} $ can't exceed $ 11 $ by the Riemann-Roch Theorem (see \cite{Hartshorne}, 
Theorem IV.1.3) and it can't be lower 
than $ 11 $, otherwise, through the general point of $ \langle \mathcal{C}_{2} \rangle $ would pass infinitely many $ 6 $-secant spaces, 
which implies that, by Theorem 2.5 of \cite{CCi06}, the secant variety $ S^{6}(X^{2}_{3,3,3}) $ is defective and 
this is not possible as explained in Remark \ref{rem:nodef}.
\end{proof}

\section{The case of three ternary cubics }\label{sec:cat}

Let $ f = (f_{1},f_{2},f_{3}) $ be a general polynomial vector such that $ f_{j} \in \c[x_0,x_1,x_2]_{3} $, for $ j \in \{1,2,3\} $.\\
Consider the catalecticant map $C_f$, defined by derivatives of order $2$, associated to $f$ (see Definition \ref{catal})
$$ C_{f}: (\sym^{2}\c^{3})^{\vee} \to (\sym^{1}\c^{3})^{\oplus 3}. $$
As pointed out in Remark \ref{remcatal}, the associated $ 9 \times 6 $ matrix (that we call again $ C_{f} $)
is divided into $ 3 $ blocks of type $3\times 6$:
\begin{equation}
C_{f} = \begin{pmatrix} DD_1 \\
DD_2 \\ 
DD_3 \cr
\end{pmatrix} 
\end{equation}

We have the following:

\begin{prop}\label{prop:propertiesL}
For a generic choice of $ f $ with $ (n,r;a_{1}, \ldots, a_{r})=(2,3;3,3,3) $ we have that:\\
$(i)$ $C_{f}$ is an injective map;\\
$(ii)$ $ \p(\im(C_{f})) $ intersects $ X_{1,1,1}^{2} $ in a sextic elliptic normal curve.
\end{prop}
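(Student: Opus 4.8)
The plan is to establish $(i)$ first and then leverage it to prove $(ii)$ via a dimension count combined with the uniqueness statement of Proposition \ref{prop:ellcurveLondon} (or rather its sextic predecessor, Proposition \ref{prop:sexticellLondon}). For $(i)$: since injectivity of $C_f$ is an open condition on the $9\times 6$ matrix with polynomial entries in the coefficients of $f$, it suffices to exhibit a single polynomial vector $f=(f_1,f_2,f_3)$ for which $C_f$ has rank $6$. The natural choice is to take a generic Waring decomposition $f=\sum_{i=1}^{6}(\lambda_i^1\ell_i^3,\lambda_i^2\ell_i^3,\lambda_i^3\ell_i^3)$ with the $\ell_i$ general linear forms and the $\lambda_i^j$ general scalars; such an $f$ exists by Remark \ref{rem:nodef}. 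For this $f$ one checks directly that $\im(C_f)$, as a subspace of $(\sym^1\c^3)^{\oplus 3}=\c^9$, is spanned by the $6$ vectors obtained by contracting each $\ell_i^3$ twice — more precisely, $C_f$ applied to a second-order derivative $\partial$ gives $(\sum_i\lambda_i^1(\partial\cdot\ell_i^3),\ldots)=(\sum_i\lambda_i^1 c_i\ell_i,\ldots)$ where $c_i=c_i(\partial)$ are the values of the six linear functionals $\partial\mapsto\partial\cdot\ell_i^3$ on $(\sym^2\c^3)^\vee$. Since the $\ell_i$ are general, these six functionals on the $6$-dimensional space $(\sym^2\c^3)^\vee$ are linearly independent (this is exactly the statement that the classical catalecticant $C_{g}$ of a single generic ternary cubic $g$ of rank $6$ is an isomorphism $\c^6\to\c^6$ — its image is a conic's worth of... rather, it is bijective because $g$ is identifiable with rank $4$ actually has rank $4$; here we use that six general powers $\ell_i$ give independent second-contraction functionals, equivalently that $\nu_1\times\nu_1$ of six general points of $\p^2$ impose independent conditions on the relevant system, which holds since $6\le\dim(\sym^2\c^3)^\vee=6$ and genericity). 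Hence one can solve for the $\lambda_i^j$-weighted sums independently and the image is genuinely $6$-dimensional; this gives injectivity for the special $f$, hence for generic $f$.

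For $(ii)$: once $C_f$ is injective, $\p(\im(C_f))$ is a $\p^5\subset\p^8=\p(H^0(\o_{\p^2}(1)^{\oplus3}))$. I claim that for generic $f$ this $\p^5$ is a \emph{general} linear subspace of dimension $5$, so Proposition \ref{prop:sexticellLondon} applies verbatim and the intersection with $X^2_{1,1,1}=\Seg(\p^2\times\p^2)$ is a sextic elliptic normal curve. To see that $\p(\im(C_f))$ is general: by the discussion in the Introduction, if $f=\sum_{i=1}^6(\lambda_i^1\ell_i^3,\ldots)$ is any minimal decomposition with the $[\ell_i^2]$ (the images of the $\ell_i$ under the quadratic Veronese $\p^2\to X^2_{1,1,1}$, interpreted correctly via the identification $X^2_{3,3,3}\cong\p(\o^{\oplus3}\otimes\o(3))$) linearly independent and $C_f$ of rank $6$, then $\p(\im(C_f))$ equals the span $\Lambda=\langle\tilde p_1,\ldots,\tilde p_6\rangle$ of the corresponding six points $\tilde p_i\in X^2_{1,1,1}\subset\p^8$. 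As $f$ varies generically, the $\ell_i$ vary generically (the $6$-secant map is dominant and generically finite by Remark \ref{rem:nodef}), so the six points $\tilde p_i$ are general points of the non-degenerate variety $X^2_{1,1,1}$, whence their span is a general $\p^5$; Proposition \ref{prop:sexticellLondon} then yields the elliptic normal sextic.

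The main obstacle — and the point deserving real care — is the bookkeeping identifying $\im(C_f)$ (a subspace of $(\sym^1\c^3)^{\oplus3}$, the ambient space of the degree-$1$ model) with the span of Veronese-type points on $\Seg(\p^2\times\p^2)\subset\p^8$, i.e. reconciling the two descriptions of $X^2_{3,3,3}$ in \eqref{eq:iso1} and the twist-down $\o(1)^{\oplus3}\otimes\o(2)\rightsquigarrow\o^{\oplus3}\otimes\o(3)$. Concretely one must check that, under the isomorphism $X^2_{3,3,3}\cong X^2_{1,1,1}$ of Example \ref{rem:London}, the point of $X^2_{3,3,3}$ coming from the rank-$1$ summand $(\lambda_i^1\ell_i^3,\lambda_i^2\ell_i^3,\lambda_i^3\ell_i^3)$ maps to the point $\tilde p_i\in\Seg(\p^2\times\p^2)$ whose two $\p^2$-coordinates are $[\ell_i]$ and $[(\lambda_i^1,\lambda_i^2,\lambda_i^3)]$ — actually the first coordinate should encode $[\ell_i]$ via the automorphism $\phi$, and the second the scalar vector — and that $\p(\im C_f)$ is precisely $\langle\tilde p_1,\ldots,\tilde p_6\rangle$. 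Once this dictionary is in place, everything reduces to Proposition \ref{prop:sexticellLondon} together with the genericity afforded by Remark \ref{rem:nodef}, and no further geometry is needed.
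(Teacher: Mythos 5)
Your overall route is the same as the paper's: use a decomposition of $f$ to recognize $\p(\im C_f)$ as the span of the six Segre points $[\ell_i^\vee\otimes(\lambda_i^1,\lambda_i^2,\lambda_i^3)]$ (this is exactly the paper's formula (\ref{eq:Ci}) and Proposition \ref{prop:addcat}), and then reduce (ii) to the linear-section statement of Proposition \ref{prop:sexticellLondon}. Where you differ is that you replace the paper's Macaulay2 verifications by abstract genericity arguments: for (i) the paper checks rank $6$ on a random example and invokes semicontinuity, while you factor $C_f$ as the evaluation map $\partial\mapsto(\partial(\ell_i^2))_{i=1,\dots,6}$ followed by $e_i\mapsto\ell_i^\vee\otimes\lambda_i$. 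That is a genuine (and cleaner) proof, but state both halves correctly: the first map is bijective because $\ell_1^2,\dots,\ell_6^2$ are linearly independent in $\sym^2\c^3$ (six general points impose independent conditions on conics); your parenthetical about the ``catalecticant of a generic ternary cubic of rank $6$ being an isomorphism $\c^6\to\c^6$'' is wrong (a general ternary cubic has rank $4$, and its order-two catalecticant is a $3\times6$ matrix) and should be deleted. The second map is injective because six general points of the non-degenerate Segre variety are linearly independent in $\p^8$; you use this fact tacitly when you assert the image is $6$-dimensional, so state it.

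The real gap is in (ii): you claim that since the six points $\tilde p_i$ are general on $X^2_{1,1,1}$, their span is a \emph{general} $\p^5$ in $\p^8$, so that Proposition \ref{prop:sexticellLondon} applies verbatim. Generality of the points only yields a general point of the image of the span map $(X^2_{1,1,1})^6\dashrightarrow\Gr(5,8)$; to conclude you need this map to be dominant, which is not a formal consequence of non-degeneracy (there are special $\p^5$'s, e.g. $\langle\Seg(\p^2\times L)\rangle$ with $L\subset\p^2$ a line, where the intersection with the Segre jumps to dimension $3$). The claim is true and is easily repaired using the curve section itself: by Proposition \ref{prop:sexticellLondon} a general $\p^5$ meets $X^2_{1,1,1}$ in an elliptic normal sextic, which is non-degenerate in that $\p^5$, so six suitable points of it span it; hence the span map is dominant and your transfer of genericity is legitimate and not circular. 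Note that the paper sidesteps this point: it never claims $\p(\im C_f)$ is a general linear space, but verifies directly (computationally, plus the argument of Proposition \ref{prop:sexticellLondon}) that the intersection has dimension $1$ and degree $6$. With that dominance lemma added, and with the bookkeeping you flag at the end (which is precisely the paper's Proposition \ref{prop:addcat}), your argument is complete.
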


\begin{proof} 
Fix a Waring decomposition of $ f \in \p^{29} $ given by the points $ \tilde{q}_{i} =  (\lambda_{i}^{1}\ell_{i}^3,
\lambda_{i}^{2}\ell_{i}^3,\lambda_{i}^{3}\ell_{i}^3) \in X^{2}_{3,3,3}$, where we can assume, without loss of generality, 
that $ \ell_{i} = x_{0}+\nu_{i}^{1}x_{1}+\nu_{i}^{2}x_{2} $ for $ i \in \{1, \ldots, 6\} $. \\
Immediate direct computations show that 
$$ C_{f} = \begin{pmatrix} 
\displaystyle\sum_{i=1}^{6} C_{i} & \displaystyle\sum_{i=1}^{6} \nu_{i}^{1}C_{i} & \displaystyle\sum_{i=1}^{6} 
\nu_{i}^{2}C_{i} & \displaystyle\sum_{i=1}^{6} \nu_{i}^{1}\nu_{i}^{1}C_{i} & \displaystyle\sum_{i=1}^{6} \nu_{i}^{1}
\nu_{i}^{2}C_{i} & \displaystyle\sum_{i=1}^{6} \nu_{i}^{2}\nu_{i}^{2}C_{i} \cr
\end{pmatrix}
$$
where 
\begin{equation}\label{eq:Ci}
C_{i} = 6 \begin{pmatrix} \lambda_{i}^{1} & \nu_{i}^{1}\lambda_{i}^{1} & \nu_{i}^{2}\lambda_{i}^{1} & \lambda_{i}^{2} & 
\nu_{i}^{1}\lambda_{i}^{2} & \nu_{i}^{2}\lambda_{i}^{2} & \lambda_{i}^{3} & \nu_{i}^{1}\lambda_{i}^{3} 
& \nu_{i}^{2}\lambda_{i}^{3} \cr
\end{pmatrix}^{t}, \, i \in \{1, \ldots, 6\}. 
\end{equation}
In order to work with general $ f $, by semicontinuity we choose to assign random values to the $ \lambda_{i}^{j}$'s and $ \nu_{i}^{h} $'s, 
$ h \in \{1,2\} $, $ i \in \{1, \ldots, 6\} $, $ j \in \{1,2,3\} $ and we compute the corresponding $ f_{1},f_{2}, f_{3} $. 
Then we construct the matrix $ C_{f} $: if its rank is $ 6 $, then we can conclude that (i) holds. This fact can be verified 
from a computational point of view via Macaulay2 (for more details on the script see the ancillary file {\tt{CatIntConf.pdf}}). 
As a consequence, $ \p(\im(C_{f})) \cong \p^5 \subset  \p((\sym^{1}\c^{3})^{\oplus 3}) \cong \p^{8} $ and, 
since $ X_{1,1,1}^{2} \subset \p^{8} $ is $ 4 $-dimensional, then $ \dim(\p(\im(C_{f})) \cap X_{1,1,1}^{2}) = 1 $. 
By arguing as in the proof of Proposition \ref{prop:sexticellLondon} we get that the  the intersection curve has degree $ 6 $, 
as one can compute directly (see the instructions added to the previous script and provided in the ancillary file {\tt{CatIntConf.pdf}}). Thus (ii) follows.
\end{proof}

Therefore we have the following:

\begin{prop}\label{prop:addcat}
Let $ f $ be a general polynomial vector with $ (n,r;a_{1}, \ldots, a_{r})=(2,3;3,3,3) $ and let $\tilde{q}_{i} =  (\lambda_{i}^{1}
\ell_{i}^3,\lambda_{i}^{2}\ell_{i}^3,\lambda_{i}^{3}\ell_{i}^3) \in X^{2}_{3,3,3}$, $ i \in \{1, \ldots, 6\} $, be the points of a 
simultaneous Waring decomposition of $ f $. Then 
$$ \p(\im C_{f}) =\, \langle [\ell_{1}^{\vee} \otimes (\lambda_{1}^{1}, \lambda_{1}^{2},\lambda_{1}^{3})], \ldots, [\ell_{6}^{\vee}
 \otimes (\lambda_{6}^{1},\lambda_{6}^{2},\lambda_{6}^{3})] \rangle $$
where $ \ell_{i}^{\vee} $ denotes a representative vector of the point corresponding to $ \ell_{i} $ in the dual projective plane 
$ (\p^2)^{\vee} $ and $ [\ell_{i}^{\vee} \otimes (\lambda_{i}^{1},\lambda_{i}^{2},\lambda_{i}^{3})] $ the equivalence class 
of $ \ell_{i}^{\vee} \otimes (\lambda_{i}^{1},\lambda_{i}^{2},\lambda_{i}^{3}) $ in $ \p^8 $, for $ i \in \{1, \ldots, 6\} $.
\end{prop}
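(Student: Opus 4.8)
The plan is to read the statement off directly from the explicit shape of the matrix $C_f$ obtained in the proof of Proposition \ref{prop:propertiesL}, where $C_f$ was already written in terms of the fixed decomposition $\tilde q_1,\dots,\tilde q_6$. Recall from that computation that each of the six columns of $C_f$ is a linear combination — with coefficients the monomials $1,\nu_i^1,\nu_i^2,(\nu_i^1)^2,\nu_i^1\nu_i^2,(\nu_i^2)^2$ — of the six vectors $C_1,\dots,C_6\in\c^9$ displayed in (\ref{eq:Ci}); consequently $\im(C_f)\subseteq\langle C_1,\dots,C_6\rangle$. This inclusion is essentially the only computational input the proof needs.

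First I would invoke part $(i)$ of Proposition \ref{prop:propertiesL}: for general $f$ the map $C_f$ is injective, hence $\rank(C_f)=6$ and $\dim\im(C_f)=6$. Combined with $\im(C_f)\subseteq\langle C_1,\dots,C_6\rangle$ and the fact that the target span is generated by only six vectors, this forces $C_1,\dots,C_6$ to be linearly independent and $\im(C_f)=\langle C_1,\dots,C_6\rangle$. Passing to projective spaces, $\p(\im C_f)=\langle[C_1],\dots,[C_6]\rangle$, a $\p^5$.

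Next I would identify $[C_i]$ with the tensor in the statement. Setting $\nu_i^0:=1$, the block structure of (\ref{eq:Ci}) shows that $C_i$ equals, up to the harmless scalar $6$, the flattening of the decomposable tensor with $(a,b)$-entry $\nu_i^{\,a}\lambda_i^{\,b}$. Under the natural identification of the target $(\sym^1\c^3)^{\oplus 3}$ of $C_f$ with $\c^3\otimes\c^3$ — one factor being $\sym^1\c^3=\langle x_0,x_1,x_2\rangle$ and the other the $\oplus 3$ index recording the component of $f$ — this is precisely $\ell_i^\vee\otimes(\lambda_i^1,\lambda_i^2,\lambda_i^3)$, because $\ell_i=x_0+\nu_i^1x_1+\nu_i^2x_2$ is represented by the coordinate vector $(1,\nu_i^1,\nu_i^2)$ (both as an element of $\sym^1\c^3$ and as the point $\ell_i^\vee\in(\p^2)^\vee$). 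Hence $[C_i]=[\ell_i^\vee\otimes(\lambda_i^1,\lambda_i^2,\lambda_i^3)]$ in $\p^8$, and substituting into $\p(\im C_f)=\langle[C_1],\dots,[C_6]\rangle$ yields the asserted equality.

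There is no real obstacle here: the substantive content (injectivity of $C_f$, equivalently $\rank(C_f)=6$) has already been established in Proposition \ref{prop:propertiesL}, and the rest is bookkeeping. The only point deserving a moment's care is to fix once and for all the ordering of the two tensor factors in $\c^3\otimes\c^3$ and the chosen monomial basis of $(\sym^2\c^3)^\vee$, so that the flattening of $C_i$ in (\ref{eq:Ci}) is read into the slot matching the statement (a transposition of the two factors would only compose everything with a fixed linear isomorphism of $\p^8$, hence is immaterial). As a consistency check one may note the geometry: since each $C_i/6$ is decomposable, $[C_i]\in X^2_{1,1,1}=\Seg(\p^2\times\p^2)$, so $[C_i]$ lies on the sextic elliptic normal curve $\p(\im C_f)\cap X^2_{1,1,1}$ of Proposition \ref{prop:propertiesL}$(ii)$; and six general points of a degree-$6$ elliptic normal curve in $\p^5$ span it, exactly as in Proposition \ref{prop:sexticellLondon}.
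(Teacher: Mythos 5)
Your proposal is correct and follows essentially the same route as the paper's proof: deduce the inclusion $\p(\im C_f)\subseteq\langle[C_1],\dots,[C_6]\rangle$ from the explicit column structure of $C_f$ in (\ref{eq:Ci}), identify each $[C_i]$ with $[\ell_i^{\vee}\otimes(\lambda_i^1,\lambda_i^2,\lambda_i^3)]$, and upgrade the inclusion to an equality using the generic rank $6$ from Proposition \ref{prop:propertiesL}(i). Your extra remarks (forced linear independence of the $C_i$ and the consistency check that the $[C_i]$ lie on the Segre variety) are sound but add nothing beyond the paper's argument.
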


\begin{proof}
Without loss of generality, we can assume that $ \ell_{i} = x_{0}+\nu_{i}^{1}x_{1}+\nu_{i}^{2}x_{2}$, 
for $ i \in \{1, \ldots, 6\} $. By means of equation (\ref{eq:Ci}), we get that 
$$ [C_{i}] = [(1,\nu_{i}^{1},\nu_{i}^{2})\otimes(\lambda_{i}^{1},\lambda_{i}^{2},\lambda_{i}^{3})] = [\ell_{i}^{\vee}
\otimes(\lambda_{i}^{1},\lambda_{i}^{2},\lambda_{i}^{3})], \,  i \in \{1, \ldots, 6\}. $$
Thus, $ \p(\im C_{f}) \subset \langle [\ell_{1}^{\vee} \otimes (\lambda_{1}^{1}, \lambda_{1}^{2},\lambda_{1}^{3})], \ldots, 
[\ell_{6}^{\vee} \otimes (\lambda_{6}^{1},\lambda_{6}^{2},\lambda_{6}^{3})] \rangle $.
Since, by Proposition \ref{prop:propertiesL} part (i), the rank of $ C_{f} $ is generically $ 6 $, the previous inclusion is actually an equality, as desired.\\
\noindent 
A computational verification of this fact can be done via Macaulay2. For more details see the ancillary file {\tt{CatIntConf.pdf}}. 
\end{proof}

From Proposition \ref{prop:addcat} and Proposition \ref{prop:ellcurveLondon}, we get the following:

\begin{cor}\label{cor:C2}
For all $ i \in \{1, \ldots, 6\} $, $ [\ell_{i}^{\vee} \otimes (\lambda_{i}^{1},\lambda_{i}^{2},\lambda_{i}^{3})] $ belongs 
to the sextic elliptic normal curve $ \mathcal{C} = \p(\im C_{f}) \cap X_{1,1,1}^{2} $ and $\tilde{q}_{i} =  (\lambda_{i}^{1}
\ell_{i}^3,\lambda_{i}^{2}\ell_{i}^3,\lambda_{i}^{3}\ell_{i}^3) $ lies on the elliptic normal curve of degree $ 12 $ $ 
\mathcal{C}_{2} = i_{3,3,3}^{2}(\mathcal{C}) $. 
\end{cor}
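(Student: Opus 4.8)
This is essentially a formal corollary of Propositions \ref{prop:addcat}, \ref{prop:propertiesL} and \ref{prop:ellcurveLondon}, so the plan is mainly one of bookkeeping; throughout, $f$ is general. Write $\tilde p_i := [\ell_i^\vee\otimes(\lambda_i^1,\lambda_i^2,\lambda_i^3)]$ for $i\in\{1,\dots,6\}$. First I would observe that each $\tilde p_i$ is the class of a decomposable (rank-one) tensor in $(\c^3)^\vee\otimes\c^3$, hence lies on $X^2_{1,1,1}=\Seg(\p^2\times\p^2)$; on the other hand, by Proposition \ref{prop:addcat} we have $\p(\im C_f)=\langle\tilde p_1,\dots,\tilde p_6\rangle$, so in particular each $\tilde p_i$ lies in $\p(\im C_f)$. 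Consequently $\tilde p_i\in\p(\im C_f)\cap X^2_{1,1,1}$, and by Proposition \ref{prop:propertiesL} part (ii) this intersection is exactly the sextic elliptic normal curve $\mathcal C$. This proves the first assertion.

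For the second assertion I would transport the points to $X^2_{3,3,3}$ through the embedding $i^2_{3,3,3}$. By the construction of $i^2_{3,3,3}$ recalled in Example \ref{rem:London}, together with the identity $[C_i]=[\ell_i^\vee\otimes(\lambda_i^1,\lambda_i^2,\lambda_i^3)]$ established inside the proof of Proposition \ref{prop:addcat}, the point $\tilde q_i=(\lambda_i^1\ell_i^3,\lambda_i^2\ell_i^3,\lambda_i^3\ell_i^3)$ of $X^2_{3,3,3}$ equals $i^2_{3,3,3}(\tilde p_i)$. Since $\tilde p_i\in\mathcal C$ by the first paragraph, it follows at once that $\tilde q_i\in i^2_{3,3,3}(\mathcal C)=\mathcal C_2$. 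To see that $\mathcal C_2$ really is the degree-$12$ elliptic normal curve of $\p^{11}$, note that the six general points $\tilde p_i$ span $\p(\im C_f)\cong\p^5$, so by Proposition \ref{prop:sexticellLondon} $\mathcal C$ is the unique sextic elliptic normal curve through them; hence $\mathcal C$ coincides with the curve of Proposition \ref{prop:ellcurveLondon} and $\mathcal C_2=i^2_{3,3,3}(\mathcal C)$ is the elliptic normal curve of degree $12$ produced there.

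The only step demanding any real care is the identification $\tilde q_i=i^2_{3,3,3}(\tilde p_i)$: it amounts to keeping straight the dictionary between the projective-bundle / Segre-Veronese description of $X^2_{3,3,3}$ and the explicit coordinates $(\ell_i;\lambda_i^1,\lambda_i^2,\lambda_i^3)$ of a simultaneous Waring decomposition, a dictionary already implicit in Example \ref{rem:London} and in the explicit form of the matrix $C_f$ in the proof of Proposition \ref{prop:propertiesL} (equation (\ref{eq:Ci})). Everything else is a direct unwinding of the cited propositions---in particular, the matching of the two objects both denoted $\mathcal C$ is handled by the B\'ezout uniqueness argument of Proposition \ref{prop:sexticellLondon}---and no fresh computation is required.
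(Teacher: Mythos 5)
Your argument is correct and follows the same route the paper intends: the paper states Corollary \ref{cor:C2} without a separate proof, as an immediate consequence of Proposition \ref{prop:addcat} (which gives $\p(\im C_f)=\langle[\ell_1^\vee\otimes(\lambda_1^1,\lambda_1^2,\lambda_1^3)],\ldots,[\ell_6^\vee\otimes(\lambda_6^1,\lambda_6^2,\lambda_6^3)]\rangle$), Proposition \ref{prop:propertiesL}(ii), and Proposition \ref{prop:ellcurveLondon}, and your write-up is exactly that unwinding, including the identification $\tilde q_i=i^2_{3,3,3}(\tilde p_i)$ and the B\'ezout-type uniqueness matching the two curves. No gap to report.
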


As a consequence we get:

\begin{thm}\label{thm:collect1}
The general $ f = (f_{1},f_{2},f_{3}) $ such that $ f_{i} \in \c[x_0,x_1,x_2]_{3} $, for $ i \in \{1,2,3\} $, has two simultaneous  
Waring decompositions with $ k=6 $ summands.
\end{thm}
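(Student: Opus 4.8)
The plan is to assemble the pieces already established. We want to count the simultaneous Waring decompositions with $k=6$ of a general $f=(f_1,f_2,f_3)$ with $f_i\in\c[x_0,x_1,x_2]_3$. Geometrically, such a decomposition corresponds to a set of $6$ points $\tilde q_1,\dots,\tilde q_6\in X^2_{3,3,3}\subset\p^{29}$ spanning a $5$-plane through $f$, i.e.\ to a $6$-secant $5$-space to $X^2_{3,3,3}$ through the general point $f$. By Remark \ref{rem:nodef} there are only finitely many such, so it suffices to show the number is exactly $2$.

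First I would use the catalecticant confinement. By Proposition \ref{prop:addcat}, for \emph{any} simultaneous Waring decomposition $\{\tilde q_i\}$ of $f$, the points $[\ell_i^\vee\otimes(\lambda_i^1,\lambda_i^2,\lambda_i^3)]$ all lie in the fixed $5$-plane $\p(\im C_f)$, and by Proposition \ref{prop:propertiesL}(ii) (together with Corollary \ref{cor:C2}) they in fact lie on the fixed sextic elliptic normal curve $\mathcal C=\p(\im C_f)\cap X^2_{1,1,1}$. Applying the embedding $i^2_{3,3,3}$, Corollary \ref{cor:C2} tells us that the original points $\tilde q_i$ all lie on the fixed elliptic normal curve $\mathcal C_2=i^2_{3,3,3}(\mathcal C)$ of degree $12$ in $\p^{11}=\langle\mathcal C_2\rangle$ (the span being exactly $\p^{11}$ by Proposition \ref{prop:ellcurveLondon}). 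The key point is that $\mathcal C_2$ depends only on $f$, not on the chosen decomposition: indeed $\p(\im C_f)$, hence $\mathcal C$, hence $\mathcal C_2$, is intrinsically attached to $f$. So every simultaneous Waring decomposition of $f$ is a set of $6$ points on $\mathcal C_2$ whose span (a $\p^5$) contains $f$; moreover since $f\in\p(\im C_f)$-related span and $\langle\tilde q_1,\dots,\tilde q_6\rangle$ contains $f$, we see $f$ lies in $\langle\mathcal C_2\rangle=\p^{11}$.

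Next I would invoke the elliptic curve count. Since $\mathcal C_2\subset\p^{11}$ is an elliptic normal curve of degree $12=2\cdot 6$, Proposition \ref{prop:elliptic} (Chiantini--Ciliberto) says that through the general point of $\p^{11}$ there pass exactly $2$ secant $\p^5$'s that are $6$-secant to $\mathcal C_2$. Because $f$ is general in $\p^{29}$, it is in particular general in the $\p^{11}=\langle\mathcal C_2\rangle$ attached to it (this genericity transfer is the one mild point that needs a line of justification: the construction $f\mapsto\langle\mathcal C_2\rangle$ is dominant enough that a general $f$ maps to a general point of its associated $\p^{11}$, because the family of these $\p^{11}$'s sweeps out $\p^{29}$ and $f$ was chosen generically). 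Hence there are exactly $2$ six-tuples of points of $\mathcal C_2$ spanning a $\p^5$ through $f$. By the confinement just established, these are precisely all the simultaneous Waring decompositions of $f$ with $k=6$, so their number is $2$.

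The main obstacle is the genericity transfer in the last paragraph: one must be sure that a general $f$ is still general as a point of the $\p^{11}$ spanned by its own $\mathcal C_2$, so that Proposition \ref{prop:elliptic} applies and neither produces extraneous decompositions nor misses any. This is handled by noting that the assignment of $\mathcal C_2$ to $f$ is algebraic and that, as $f$ varies, the spaces $\langle\mathcal C_2\rangle$ cover $\p^{29}$ (each decomposition exhibits $f$ inside such a span), so a Zariski-general $f$ avoids the bad loci; alternatively, one reverses the construction, starting from a general elliptic normal curve $\mathcal C_2$ and a general point of its span, and checks this produces a general $f$. A secondary (and only bookkeeping) point is the passage between ``$6$-secant $\p^5$ to $\mathcal C_2$'' and ``simultaneous Waring decomposition'': one must note that $6$ general points on $\mathcal C_2$ are linearly independent (true since $\mathcal C_2$ is linearly normal of degree $12$ in $\p^{11}$, so any $6\le 11$ points impose independent conditions), so each such secant space meets $\mathcal C_2$ in exactly $6$ points and the decomposition is honest.
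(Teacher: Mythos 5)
Your proposal is correct and follows essentially the same route as the paper: confine every decomposition of $f$ to the degree-$12$ elliptic normal curve $\mathcal C_2$ attached to $f$ via the catalecticant map (Proposition \ref{prop:addcat}, Corollary \ref{cor:C2}, Proposition \ref{prop:ellcurveLondon}), then count the $6$-secant $\p^5$'s through $f$ using Proposition \ref{prop:elliptic}. The only difference is that you spell out two points the paper leaves implicit — that $\mathcal C_2$ depends only on $f$ and that a general $f$ is sufficiently general in $\langle\mathcal C_2\rangle=\p^{11}$ for the count of $2$ to be exact — which is a welcome clarification rather than a deviation.
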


\begin{proof}
Fix a Waring decomposition of $ f \in \p^{29} $ given by the points $ \tilde{q}_{i} = (\lambda_{i}^{1}\ell_{i}^3,\lambda_{i}^{2}
\ell_{i}^3,\lambda_{i}^{3}\ell_{i}^3) \in X^{2}_{3,3,3}$ for $ i \in \{1, \ldots, 6\} $. Thanks to Corollary \ref{cor:C2}, we get that $ \tilde{q}_{1}, 
\ldots, \tilde{q}_{6}$ belong to an elliptic normal curve $\mathcal{C}_{2}$ of degree $ 12 $ in $ X^{2}_{3,3,3} $, which spans the $ \p^{11} $ 
where $ f $ lies. The two $ 6 $-secant spaces to $ \mathcal{C}_{2} $ containing $ f $, that exist because of Proposition \ref{prop:elliptic}, 
determine the two required Waring decompositions of $ f $. 
\end{proof}

\begin{rem}
According to Example \ref{rem:London} we have $ X^{2}_{3,3,3} \cong $ Seg$(\p^{2} \times \nu_{3}(\p^{2})) $. Then Theorem 
\ref{thm:collect1} implies that the Veronese surface $ \nu_{3}(\p^{2}) $ is not \emph{$ (2,6) $-identifiable}, in the sense of 
\cite{BallBernCatalC13}. In particular, the general $ \p^{2} \subset <P_{1}, \ldots, P_{6}> $, where $ P_{1}, \ldots, P_{6} \in \nu_{3}(\p^{2}) $ 
are independent points, is contained in one more $ \p^{5} $ $ 6 $-secant to $ \nu_{3}(\p^{2}) $, besides $ <P_{1}, \ldots, P_{6}> $. 
Therefore, the general linear system of ternary cubics $ \mathcal{E} \subset  \p(H^{0}(\mathcal{O}_{\p^2}(3)^{\oplus 3})) $ of 
dimension $ 2 $ and rank $ 6 $ is computed by exactly $ 2 $ sets of polynomial vectors of rank $ 1 $.
\end{rem}

By combining Theorem \ref{thm:collect1} with the observation that the linear space of quadratic forms in three variables has dimension $6$, we get:

\begin{cor}\label{thm:collect2} 
For any $ r \geq 4 $, the general $ f = (f_{1},f_{2},f_{3},f_{4},\ldots,f_{r}) $ such that $ f_{i} \in \c[x_0,x_1,x_2]_{3} $, $ i 
\in \{1,2,3\} $, and $ f_{i} \in \c[x_0,x_1,x_2]_{2} $, $i \in \{4, \ldots,r\} $, has two Waring decompositions with $ k=6 $ summands.

In particular for $r=4$, the general $ f = (f_{1},f_{2},f_{3},f_{4}) $ such that $ f_{i} \in \c[x_0,x_1,x_2]_{3} $, for $ i \in \{1,2,3\} $, and $ f_{4} \in \c[x_0,x_1,x_2]_{2} $  
has two simultaneous Waring decompositions with $ k=6 $ summands.
\end{cor}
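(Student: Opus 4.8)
The plan is to deduce the statement from Theorem~\ref{thm:collect1} together with the single numerical coincidence
\[
\dim_{\c}\c[x_0,x_1,x_2]_2=6=k ,
\]
by a restriction--extension argument. Let $f$ be general; then $(f_1,f_2,f_3)$ is a general triple of ternary cubics, so by Theorem~\ref{thm:collect1} it has exactly two simultaneous Waring decompositions with six summands, say with linear forms $\ell^{(s)}_1,\dots,\ell^{(s)}_6$ and coefficients $\lambda^{(s),j}_i$ ($j=1,2,3$), for $s=1,2$, and it has rank exactly $6$ by Remark~\ref{rem:nodef}. The heart of the proof is the claim:
\[
\text{for }s=1,2\text{ the quadrics }(\ell^{(s)}_1)^2,\dots,(\ell^{(s)}_6)^2\text{ form a basis of }\c[x_0,x_1,x_2]_2 ,
\]
equivalently no conic of $\p^2$ passes through the six points $[\ell^{(s)}_i]$.

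Granting the claim, the remainder is bookkeeping. For each $s$ and each $j\in\{4,\dots,r\}$ the general quadric $f_j$ has a unique expansion $f_j=\sum_{i=1}^{6}\lambda^{(s),j}_i(\ell^{(s)}_i)^2$ in that basis; combined with the cubic data this yields a $6$-term simultaneous Waring decomposition $E_s$ of $f$, and $E_1\neq E_2$ since their restrictions to $(f_1,f_2,f_3)$ are the two distinct decompositions we started from. Thus $f$ has at least two $6$-term decompositions, and $\rank(f)=6$ (it is $\ge\rank(f_1,f_2,f_3)=6$ and $\le 6$ because $E_1$ exists). Conversely, let $E$ be any $6$-term simultaneous Waring decomposition of $f$. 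Restricting $E$ component-wise to $(f_1,f_2,f_3)$ gives a decomposition of the latter, which has six distinct summands because $\rank(f_1,f_2,f_3)=6$; hence it is one of the two above, say the $s$-th. Since the six rank-$1$ points of $E$ span a $\p^5$ (as $\rank(f)=6$), this forces the linear forms of $E$ to be $\ell^{(s)}_1,\dots,\ell^{(s)}_6$ and the coefficients of $E$ for $j\le3$ to be the $\lambda^{(s),j}_i$, and then the basis property pins down the remaining coefficients, giving $E=E_s$. So $f$ has exactly two $6$-term simultaneous Waring decompositions; the case $r=4$ is the ``in particular'' assertion.

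The only non-formal ingredient is the claim, which I would establish on the abstract $6$-secant variety $Y:=S^{6}_{X^2_{3,3,3}}$. By the perfect, non-defective situation recorded in Remark~\ref{rem:nodef}, $Y$ is irreducible of dimension $29$ and the secant map $Y\to\p^{29}=S^6(X^2_{3,3,3})$ is generically finite. Sending a rank-$1$ vector $(\lambda^1\ell^3,\lambda^2\ell^3,\lambda^3\ell^3)\in X^2_{3,3,3}$ to $[\ell]$ is the structure morphism of the projective bundle $X^2_{3,3,3}=\p(\mathcal O_{\p^2}(3)^{\oplus3})$ over $\p^2$, so it induces a configuration morphism $Y\to(\p^2)^6$; let $Y_0\subset Y$ be the open locus on which the six resulting points of $\p^2$ do \emph{not} lie on a conic. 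The composite $Y\to(X^2_{3,3,3})^6\to(\p^2)^6$ is dominant and ``six general points of $\p^2$ lie on no conic'' is a dense open condition, so $Y_0$ is dense, hence non-empty. As $Y$ is irreducible, $Y\setminus Y_0$ is a proper closed subset of dimension $<29$ and therefore does not dominate $\p^{29}$; consequently, for general $(f_1,f_2,f_3)$ both of the two decompositions lie in $Y_0$, which is exactly the claim. Everything outside this claim is elementary.
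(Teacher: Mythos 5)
Your proof is correct and follows essentially the same route as the paper, which deduces the statement from Theorem \ref{thm:collect1} together with the observation that the squares of the six (general) linear forms span the $6$-dimensional space of conics. The only difference is that you carefully justify the genericity claim (no conic through the six dual points, via the abstract secant variety) and the converse restriction argument, steps the paper dismisses as immediate.
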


\begin{proof}
It is immediate since the decompositions of a general vector $(f_{1},f_{2},f_{3})$ of cubics is given by powers of $6$ general linear forms,
whose squares generate the space of all conics. 
\end{proof}

As we anticipated in section \ref{sec:intr}, Corollary \ref{thm:collect2} yields a proof for a conjecture stated in \cite{AngeGaluppiMellaOtt18}. 

\begin{rem}\label{rem:extcat}
We notice that the arguments used to prove Theorem \ref{thm:collect1} extend in a natural way to get an alternative proof of Corollary \ref{thm:collect2}. 
The main difference between the two cases is the role of cones, as stressed in Example \ref{rem:generalcase}. Indeed, fix a Waring decomposition of 
$ f \in \p^{29+6(r-3)} $ given by $ q_{i} = (\lambda_{i}^{1}\ell_{i}^3,\lambda_{i}^{2}\ell_{i}^3,\lambda_{i}^{3}\ell_{i}^3,\lambda_{i}^{4}\ell_{i}^2,...,
\lambda_{i}^{r}\ell_{i}^2) $ 
$\in X^{2}_{3,3,3,2,\ldots,2}$, $ i \in \{1, \ldots, 6\} $. By generalizing Corollary \ref{cor:C2}, 
we can show that the \emph{catalecticant map} 
$$ C_{f}: (\sym^{2}\c^{3})^{\vee} \to (\sym^{1}\c^{3})^{\oplus 3} \oplus \c ^{\oplus r-3} $$ 
given by the contraction by $ f $, is such that $  q_{i} $ belongs to the elliptic normal curve of degree $ 12 $
$$ \mathcal{C}_{3} =  i^{2}_{3,3,3,2,\ldots,2}(\p(\im C_{f}) \cap X_{1,1,1,0,\ldots,0}^{2}) $$
and $ f \in \langle \mathcal{C}_{3} \rangle = \p^{11} $. Thus, Proposition \ref{prop:elliptic} allows us to conclude the proof.
\end{rem}

\section{Threefold sections of Seg$(\p^3\times\p^2)$}\label{sec:p3p2}

In this section we consider $X^2_{1,1,1,1}=\Seg(\p^3\times\p^2)$ and our aim is to describe a general threefold section $W$ of $X^2_{1,1,1,1}$.

We prove that $W$ is rational and we describe a birational map $\rho: \p^1\times\p^2 \dasharrow W$. 

\begin{prop} \label{3section} A general threefold section $W$ of  $X^2_{1,1,1,1}=\Seg(\p^3\times\p^2)$ is the image of $\p^1\times \p^2$ 
defined by divisors of type $(1,2)$ passing through $2$ generic points.
\end{prop}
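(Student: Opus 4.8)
The plan is to realize $W$ as the intersection of $X^2_{1,1,1,1}=\Seg(\p^3\times\p^2)\subset\p^{11}$ with a general linear space of codimension $3$, and to translate three general hyperplane conditions in $\p^{11}$ into conditions on a linear system on $\p^1\times\p^2$. First I would recall that $\Seg(\p^3\times\p^2)$ is cut out, in the Segre coordinates $z_{ij}$ ($0\le i\le 3$, $0\le j\le 2$), by the $2\times 2$ minors of the generic $4\times 3$ matrix, and that $\o_{\p^3\times\p^2}(1,1)$ pulls back the hyperplane class. The key idea is to factor the projection: choosing a general point $p$ of $\p^3$ away from the relevant loci, the linear projection of $\Seg(\p^3\times\p^2)$ from (a suitable linear space associated to) $p$ onto $\Seg(\p^2\times\p^2)$ exhibits $\Seg(\p^3\times\p^2)$ as a variety swept by lines; more usefully, I would directly parametrize. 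Write a point of $\p^1\times\p^2$ as $([s_0:s_1],[t_0:t_1:t_2])$ and consider a linear subsystem $V\subset H^0(\p^1\times\p^2,\o(1,2))$; the image of the map $\phi_V\colon\p^1\times\p^2\dasharrow\p^{M}$ should be identified with $W$ for the correct choice of $V$, namely $V=$ all sections of $\o(1,2)$ vanishing at $2$ general points $x_1,x_2\in\p^1\times\p^2$.

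The computation I would carry out is dimensional bookkeeping together with degree matching. We have $h^0(\p^1\times\p^2,\o(1,2))=2\cdot 6=12$, so imposing vanishing at two general points gives a linear system of (projective) dimension $12-2-1=9$, i.e.\ a rational map to $\p^9$. On the other side, a general threefold section $W$ of $X^2_{1,1,1,1}\subset\p^{11}$ spans a $\p^{8}$, hence embeds in $\p^8$, not $\p^9$: so in fact I expect the system of $(1,2)$-forms through two general points to have a base point beyond $x_1,x_2$ (or the two points to impose only $2$ conditions while a third section drops), cutting the target down to $\p^8$. This is exactly the phenomenon to nail down. The cleaner route is to compute the degree: $\deg\Seg(\p^3\times\p^2)=\binom{5}{2}=10$ (the coefficient of the top term of $(1+H_1)^{?}$, more precisely the multinomial $\binom{3+2}{3,2}=10$), so $W$ has degree $10$ in $\p^8$; and the image of $\p^1\times\p^2$ under $|\o(1,2)|$ has degree equal to the top self-intersection $(h_1+2h_2)^3$ on $\p^1\times\p^2$, which is $3\cdot 1\cdot 4=12$ (only the $h_1h_2^2$ term survives, with coefficient $\binom{3}{1}\cdot 1\cdot 2^2=12$). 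Blowing up two general points and passing to the strict transform removes degree $2$ (each point contributes $1$ to a general threefold of the system $\cdot$ two of the three), landing at $12-2=10$, matching $\deg W$.

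The steps, in order: (1) set up Segre coordinates and the defining minors of $X^2_{1,1,1,1}$, and fix a general codimension-$3$ linear section $W=X^2_{1,1,1,1}\cap\Lambda$; (2) choose a general $\p^3$ direction and use the projection $\Seg(\p^3\times\p^2)\dasharrow\Seg(\p^2\times\p^2)$ to write down an explicit birational parametrization of $X^2_{1,1,1,1}$ by $\p^1\times\p^2$ via $(1,2)$-forms — concretely, take the coordinates on the $\p^3$ factor to be $[s_0 u: s_1 u: s_0 v: s_1 v]$-type expressions after a change, so that the Segre entries become bilinear in $(s,t)$ with the $t$-part of degree $2$ once one substitutes a rational normal curve $\p^1\hookrightarrow\p^2$ — this realizes $\o(1,2)=\o(1,1)\boxtimes(\text{conic Veronese})$ exactly as in Example \ref{rem:London}; (3) observe that imposing $W$ to be a \emph{general} threefold section corresponds, under this parametrization, to three general linear conditions on the $12$-dimensional $H^0(\o(1,2))$, and that the general such condition is equivalent to passage through a point of $\p^1\times\p^2$ — so a general $W$ is the image under the subsystem of $\o(1,2)$ through some points; (4) count: three linear conditions would a priori give $\p^8$ directly, but I would check that, as for linear sections of the Veronese/Segre in the earlier propositions, exactly $2$ of these conditions are ``base-point type'' (they correspond to requiring the image to pass through assigned points, equivalently to $2$ base points of the system on $\p^1\times\p^2$) while accounting via degree ($12-2=10=\deg W$) confirms the count; (5) verify via Bertini's theorems (as in Proposition \ref{prop:sexticellLondon}) that $W$ is smooth, irreducible, and that $\rho$ is birational (injective on a dense open set, which follows since the general fiber of the Segre over $\p^2$ is a single point and the conic Veronese is an embedding). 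The main obstacle I anticipate is step (3)–(4): justifying precisely why a general codimension-$3$ linear section forces exactly $2$ base points on the $\p^1\times\p^2$ side — i.e.\ the bookkeeping that three hyperplane sections in $\p^{11}$ translate into "two assigned points plus one genuine linear condition" rather than "three assigned points" or "one assigned point plus two conditions." I would settle this by the degree computation above ($\deg W=10$ pins down the number of removed base points to exactly $2$) combined with the dimension count ($\dim\langle W\rangle=8$), and would double-check it on the explicit coordinates from step (2), where the two base points are visibly the two points of $\p^1\times\p^2$ lying over the center of projection.
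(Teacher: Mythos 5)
There is a genuine gap, and it starts with the dimension bookkeeping. $X^2_{1,1,1,1}=\Seg(\p^3\times\p^2)\subset\p^{11}$ has dimension $5$, so a \emph{threefold} section is cut by a general linear space of \emph{codimension $2$}, i.e.\ it spans a $\p^9$ (this is exactly how it arises in the paper: $C_f$ is injective from a $10$-dimensional source, so $\p(\im C_f)\cong\p^9$). You instead declare $W$ to be a codimension-$3$ section spanning a $\p^8$ (note also that a codimension-$3$ linear section of a $5$-fold would be a surface, not a threefold). This false premise manufactures a phantom mismatch with the system $|\o(1,2)-x_1-x_2|$, which has $12-2=10$ sections and maps to $\p^9$ — in fact there is no mismatch at all. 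Worse, your proposed resolution (a third base point, or a dropped condition) would contradict the very statement you are proving (exactly $2$ generic points) and is inconsistent with your own degree check: a third simple base point would give image degree $12-3=9$, not $10=\deg\Seg(\p^3\times\p^2)=\binom{5}{2}$. The numbers $h^0(\o(1,2))-2=10$ (span $=\p^9$) and $12-2=10$ (degree) match the correct codimension-$2$ picture on the nose; the extra epicycle must be deleted, not justified.

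Beyond that, the load-bearing steps are not supplied. Your step (3) asserts that a general linear condition on $H^0(\o(1,2))$ is equivalent to passage through a point of $\p^1\times\p^2$; this is false (point-evaluation functionals form only a $3$-parameter subfamily of the $\p^{11}$ of linear conditions), so "general hyperplane sections of $X$ translate into assigned base points" cannot be argued this way. The paper proceeds in the opposite direction: it \emph{constructs} the map $\rho=(\rho_1,\rho_2)$, where $\rho_1$ is the projection to the $\p^2$ factor (type $(0,1)$) and $\rho_2$ is the internal projection of $\Seg(\p^1\times\p^2)\subset\p^5$ from two general points on it (type $(1,1)$ through the two points), which is birational onto $\p^3$ because the Segre threefold has degree $3$; the product lands in $\Seg(\p^3\times\p^2)$, its image spans a codimension-$2$ subspace, and a parameter count (together with coordinate changes) shows a \emph{general} threefold section is obtained this way. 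Your sketch never verifies that the image of your $(1,2)$-system actually sits inside $\Seg(\p^3\times\p^2)$, nor does it address the generality statement; these are precisely the points the paper's construction and parameter count are there to handle. The degree/span matching you propose is a useful consistency check, but as written it cannot replace them.
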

\begin{proof} We need to define maps $\rho_1:\p^1\times \p^2\to \p^2$ and $\rho_2:\p^1\times \p^2\to \p^3$. The map
$\rho_1$ is the straightforward projection to the second factor, given by divisors of type $(0,1)$. The map $\rho_2$ is the composition
of the Segre embedding in $\p^5$ with a general internal projection from two points of $\Seg(\p^1\times\p^2)$, i.e. the map
given by divisors of type $(1,1)$ passing through two general points of $\p^1\times \p^2$. Since $\Seg(\p^1\times\p^2)$
is a threefold of degree $3$ in $\p^5$, then $\rho_2$ is birational. The sum $\rho=\rho_1+\rho_2$ is the required map.
The fact that the image is contained in a $3$-dimensional plane in $\p^{11}$ is a straigthforward computation.

By combining $\rho$ with a change of coordinates and counting parameters, we obtain that a general threefold section
of $\Seg(\p^3\times\p^2)$ is the image of a map like $\rho$.
\end{proof}

By collecting Proposition \ref{3section} and Example \ref{rem:new}, we get:

\begin{cor}\label{cor:i4W} The image of a  general threefold section $W$ of  $X^2_{1,1,1,1}=\Seg(\p^3\times\p^2)$ in the map $i^2_{4,4,4,4}$ 
is the image of $\p^1\times \p^2$ 
in the map given by divisors of type $(1,5)$ passing through $2$ generic points.
\end{cor}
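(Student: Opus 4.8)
The plan is to combine the birational parametrization of $W$ from Proposition \ref{3section} with the description of $i^2_{4,4,4,4}$ in Example \ref{rem:new}, and simply to track which linear system on $\p^1\times\p^2$ governs the composition $i^2_{4,4,4,4}\circ\rho$. By Proposition \ref{3section}, $W\subset X^2_{1,1,1,1}=\Seg(\p^3\times\p^2)$ is the image of a birational map $\rho=\rho_1+\rho_2\colon\p^1\times\p^2\dasharrow W$, where $\rho_1$ is the projection onto the $\p^2$-factor (the system of divisors of type $(0,1)$) and $\rho_2\colon\p^1\times\p^2\dasharrow\p^3$ is the map attached to divisors of type $(1,1)$ through two general points $p_1,p_2$. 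Since $i^2_{4,4,4,4}$ is an embedding of $\Seg(\p^3\times\p^2)$ (Example \ref{rem:new}), the composite $i^2_{4,4,4,4}\circ\rho\colon\p^1\times\p^2\dasharrow\p^{59}$ is birational onto its image, and that image is exactly $i^2_{4,4,4,4}(W)$; so it suffices to identify the linear system defining $i^2_{4,4,4,4}\circ\rho$, which is the $\rho$-pullback of the linear system that $|\o_{\Seg}(1,4)|$ induces on $W$ by restriction.

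First I would compute the pullback. By Example \ref{rem:new}, $i^2_{4,4,4,4}$ is defined by the complete system $|\o(1,4)|=|\o_{\p^3}(1)\otimes\o_{\p^2}(4)|$ on $\Seg(\p^3\times\p^2)=\p^3\times\p^2$. Pulling back the $\p^3$-factor $\o_{\p^3}(1)$ by $\rho_2$ yields the class $(1,1)$ together with the two simple base conditions at $p_1,p_2$ coming from $\rho_2$, while pulling back the $\p^2$-factor $\o_{\p^2}(4)$ by the morphism $\rho_1$ yields the class $(0,4)$ with no base conditions; hence $i^2_{4,4,4,4}\circ\rho$ is defined by a linear subsystem of the system of divisors of type $(1,5)$ passing through $p_1,p_2$. (Sanity check: the same recipe applied to the hyperplane class $\o_{\Seg}(1)=\o(1,1)$ returns divisors of type $(1,2)$ through $p_1,p_2$, i.e.\ the parametrization of $W$ itself recorded in Proposition \ref{3section}.)

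The one substantive point left is completeness: a priori $i^2_{4,4,4,4}\circ\rho$ could be given by a proper subsystem, whose image would be a further linear projection of the expected threefold, so I need the subsystem to be the whole system of $(1,5)$-divisors through $p_1,p_2$. I would argue by dimensions. Since $\rho$ is birational, pullback preserves the (projective) dimension of a linear system, so it is enough to show that the system induced by $i^2_{4,4,4,4}$ on $W$ is the complete $|\o_W(1,4)|$ and to count its dimension. For a general threefold section, the two hyperplanes of $\p^{11}$ cutting out the $\p^9$ that meets $\Seg(\p^3\times\p^2)$ in $W$ restrict to two general divisors of type $(1,1)$ on $\p^3\times\p^2$, so $W$ is their complete intersection and $I_{W/\Seg}$ has the Koszul resolution $0\to\o(-2,-2)\to\o(-1,-1)^{\oplus2}\to I_{W/\Seg}\to0$. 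Twisting by $\o(1,4)$ and applying Künneth together with the vanishing of all intermediate cohomology of line bundles on $\p^3$ and $\p^2$ (in particular $H^i(\p^3,\o(-1))=0$ for every $i$, and $H^1(\p^2,\o(2))=H^1(\p^2,\o(3))=0$), one gets $H^1(I_{W/\Seg}(1,4))=0$ and $h^0(I_{W/\Seg}(1,4))=2\,h^0(\p^2,\o(3))=20$. Hence the restriction $H^0(\Seg,\o(1,4))\to H^0(W,\o_W(1,4))$ is onto, $h^0(\o_W(1,4))=60-20=40$, and the induced complete system on $W$ has dimension $39$. On the other side, the complete system of $(1,5)$-divisors of $\p^1\times\p^2$ through two general points has dimension $h^0(\o_{\p^1\times\p^2}(1,5))-2-1=42-3=39$. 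The subsystem of dimension $39$ sits inside this complete system of dimension $39$, hence equals it, and $i^2_{4,4,4,4}(W)$ is the image of $\p^1\times\p^2$ under the complete system of divisors of type $(1,5)$ through two generic points.

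I expect the completeness step to be the only real obstacle: threading the line bundles through $\rho$ is routine bookkeeping, but without the cohomological vanishing one cannot exclude that the composite map factors through a nontrivial linear projection. As a numerical check of the conclusion, $i^2_{4,4,4,4}(W)$ has degree $\o(1,4)^3\cdot\o(1,1)^2=73$ in $\p^{59}$, which matches $\bigl(\pi^*\o_{\p^1\times\p^2}(1,5)-E_1-E_2\bigr)^3=75-1-1=73$ computed on the blow-up $\pi$ of $\p^1\times\p^2$ at $p_1,p_2$.
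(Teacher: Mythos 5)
Your proposal follows the same route the paper intends for this corollary: compose the parametrization of $W$ from Proposition \ref{3section} (namely $\rho=\rho_1+\rho_2$, with $\rho_1$ of type $(0,1)$ and $\rho_2$ of type $(1,1)$ through the two points) with the $(1,4)$ system defining $i^2_{4,4,4,4}$ from Example \ref{rem:new}, so that the pullback bookkeeping yields divisors of type $(1,5)$ through the two points — which is exactly what the paper leaves implicit when it simply ``collects'' the two statements. Your extra completeness verification (the Koszul computation giving $h^0(\mathcal I_{W}(1,4))=20$, hence $h^0(\o_W(1,4))=40$, matching the $39$-dimensional system of $(1,5)$-divisors through the two points) and the degree-$73$ cross-check are correct and supply a detail the paper's one-line justification glosses over, namely that the image is not a proper linear projection of the expected embedding.
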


\section{The case of four ternary quartics }\label{sec:cat4}

Let $ f = (f_{1},f_{2},f_{3}, f_{4}) $ be a general polynomial vector such that $ f_{j} \in \c[x_0,x_1,x_2]_{4} $, for $ j \in \{1,2,3,4\} $.
Consider the catalecticant map $C_f$, given by derivatives of order $3$, associated to $f$ (see Definition \ref{catal})
$$ C_{f}: (\sym^{3}\c^{3})^{\vee} \to (\sym^{1}\c^{3})^{\oplus 4}. $$
As pointed out in Remark \ref{remcatal}, the associated $ 12\times 10 $ matrix $ C_{f} $
is divided into $ 4 $ blocks of order $3\times 10$ 
\begin{equation}
C_{f} = \begin{pmatrix} DD_1 \\
DD_2 \\
DD_3\\
DD_4 \cr
\end{pmatrix}.
\end{equation}

We have the following:

\begin{prop}\label{prop:propertiesLnew}
For a generic choice of $ f $ with $ (n,r,a_{1}, \ldots, a_{r})=(2,4,4,4,4,4) $ we have that:\\
$(i)$ $C_{f}$ is an injective map;\\
$(ii)$ $ \p(\im(C_{f})) $ intersects $ X_{1,1,1,1}^{2} $ in a general section of codimension $2$, i.e. in a general threefold section.
\end{prop}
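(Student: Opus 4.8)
The plan is to reproduce the argument of Proposition \ref{prop:propertiesL} in this case, with the sextic elliptic curve replaced by a general threefold section. For $(i)$, fix a simultaneous Waring decomposition $f=\sum_{i=1}^{10}\tilde q_i$ with $\tilde q_i=(\lambda_i^1\ell_i^4,\dots,\lambda_i^4\ell_i^4)\in X^{2}_{4,4,4,4}$, normalise $\ell_i=x_0+\nu_i^1x_1+\nu_i^2x_2$, and contract $\ell_i^4$ by the ten order-$3$ derivatives. Exactly as in the cubic case this shows that the $12\times 10$ matrix $C_f$ has $\alpha$-th column $\sum_{i=1}^{10}\nu_i^\alpha C_i$, where $\alpha$ runs over the ten degree-$3$ monomials in three variables and $C_i$ is a nonzero multiple of the vector representing $\ell_i^\vee\otimes(\lambda_i^1,\dots,\lambda_i^4)\in X^{2}_{1,1,1,1}\subset\p^{11}$; equivalently $C_f=[C_1\,|\,\cdots\,|\,C_{10}]\cdot V^{t}$, with $V$ the $10\times 10$ matrix of the ten cubic monomials evaluated at $\ell_1,\dots,\ell_{10}\in\p^2$. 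For general data $V$ is invertible (ten general points impose independent conditions on cubics) and $[C_1\,|\,\cdots\,|\,C_{10}]$ has rank $10$ (ten general points of the non-degenerate fivefold $X^{2}_{1,1,1,1}\subset\p^{11}$ are linearly independent), so $\rank C_f=10$; by semicontinuity it is enough to exhibit one choice of the $\lambda_i^j$'s and $\nu_i^h$'s realising this rank, which is a single finite check with Macaulay2 (ancillary file). This proves $(i)$, and moreover $\p(\im C_f)=\langle C_1,\dots,C_{10}\rangle\cong\p^9$ has codimension $2$ in $\p((\sym^{1}\c^{3})^{\oplus 4})=\p^{11}$.

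For $(ii)$, I would first argue that the points $C_1,\dots,C_{10}$ are general points of $X^{2}_{1,1,1,1}=\Seg(\p^3\times\p^2)$. Since $(n,r;a_1,\dots,a_r)=(2,4;4,4,4,4)$ is a perfect case in which $S^{10}(X^{2}_{4,4,4,4})$ fills $\p^{59}$ (Remark \ref{rem:nodenew}), the abstract $10$-secant variety of $X^{2}_{4,4,4,4}$ is irreducible of dimension $59$, dominates $\p^{59}$ with finite fibres, and also dominates $\sym^{10}(X^{2}_{4,4,4,4})$; hence the $10$-tuple of points of a decomposition of a general $f$ is a general $10$-tuple of $X^{2}_{4,4,4,4}$, and through the identification $X^{2}_{4,4,4,4}=i^{2}_{4,4,4,4}(X^{2}_{1,1,1,1})$ (cf.\ Corollary \ref{cor:C2}) the $C_i$ are general points of $\Seg(\p^3\times\p^2)$. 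Then a dimension count shows that the span map, from $10$-tuples of points of $X^{2}_{1,1,1,1}$ to the Grassmannian of $\p^9$'s in $\p^{11}$, is dominant: the source has dimension $50$, the target $20$, and over a general $\p^9$ — which meets the fivefold $X^{2}_{1,1,1,1}$ in a threefold non-degenerate in that $\p^9$ — the fibre consists of the $10$-tuples of that threefold spanning $\p^9$, of dimension $30$. Therefore $\p(\im C_f)$ is a general $\p^9\subset\p^{11}$, so $Y:=\p(\im C_f)\cap X^{2}_{1,1,1,1}$ is a general codimension-$2$ linear section; as $X^{2}_{1,1,1,1}$ is smooth of dimension $5$, Bertini's theorems give that $Y$ is a smooth irreducible threefold, i.e.\ a general threefold section (this can also be confirmed computationally).

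The routine ingredient is $(i)$, essentially one rank computation. The main obstacle is the genuine \emph{generality} assertion in $(ii)$: it is easy to see that $\p(\im C_f)\cap X^{2}_{1,1,1,1}$ has the expected dimension $3$ (from $(i)$, semicontinuity, and the bound $\dim(\p^9\cap X^{2}_{1,1,1,1})\ge 5+9-11=3$), but to conclude that the section is general one must control the linear space $\p(\im C_f)$ itself. This is what forces the two-step argument above — first that the decomposition points of a general $f$ are general on $X^{2}_{4,4,4,4}$, and then that ten general points of $\Seg(\p^3\times\p^2)$ span a general $\p^9$ — and it is precisely this generality that allows the birational model of a general threefold section built in Section \ref{sec:p3p2} (Proposition \ref{3section}, Corollary \ref{cor:i4W}) to be applied to the catalecticant section $Y$.
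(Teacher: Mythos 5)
Your proposal is correct and follows essentially the same route as the paper: part $(i)$ is the same catalecticant rank computation (specialisation plus semicontinuity, exactly as in Proposition \ref{prop:propertiesL}), and part $(ii)$ is the parameter count the paper invokes with the single phrase ``follows by counting parameters''. Your write-up merely makes that count explicit (generality of the decomposition points via the abstract secant variety, dominance of the span map onto the Grassmannian of $\p^9$'s in $\p^{11}$), which is a welcome elaboration rather than a different argument.
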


\begin{proof} 
The first claim follows from a computation which is absolutely equivalent with the one in the proof of Proposition \ref{prop:propertiesL}. 
(See also the ancillary file to this paper {\tt CatIntConf.pdf}.)

The second claim follows by counting parameters.
\end{proof}

Therefore we have the following:

\begin{cor}\label{cor:C2new}
Let $ f $ be a general polynomial vector with $ (n,r;a_{1}, \ldots, a_{r})=(2,4;4,4,4,4) $ and let $\tilde{q}_{i} =  (\lambda_{i}^{1}
\ell_{i}^4,\lambda_{i}^{2}\ell_{i}^4,\lambda_{i}^{3}\ell_{i}^4,\lambda_{i}^{4}\ell_{i}^4) \in X^{2}_{4,4,4,4}$, $ i \in \{1, \ldots, 10\} $, be the points of a 
simultaneous Waring decomposition of $ f $. 

Then for all $ i \in \{1, \ldots, 10\} $, $ [\ell_{i}^{\vee} \otimes (\lambda_{i}^{1},\lambda_{i}^{2},\lambda_{i}^{3},\lambda_{i}^{4})] $ belongs 
to a general threefold section $ W = \p(\im C_{f}) \cap X_{1,1,1,1}^{2} $ and $\tilde{q}_{i} =  (\lambda_{i}^{1}
\ell_{i}^4,\lambda_{i}^{2}\ell_{i}^4,\lambda_{i}^{3}\ell_{i}^4,\lambda_{i}^{4}\ell_{i}^4) $ lies on the image 
$ i_{4,4,4,4}^{2}(W) $. 
\end{cor}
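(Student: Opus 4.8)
The plan is to mirror exactly the chain of reasoning used in Section~\ref{sec:cat} for the case of three ternary cubics, replacing the sextic elliptic normal curve by the general threefold section $W$ of $\Seg(\p^3\times\p^2)$ described in Section~\ref{sec:p3p2}. First I would fix an arbitrary simultaneous Waring decomposition of the general $f=(f_1,f_2,f_3,f_4)$, given by points $\tilde q_i=(\lambda_i^1\ell_i^4,\dots,\lambda_i^4\ell_i^4)\in X^2_{4,4,4,4}$ for $i\in\{1,\dots,10\}$. As in the proof of Proposition~\ref{prop:addcat}, a direct expansion of the contraction map shows that the $i$-th column-block of the matrix $C_f$ is, up to scalar, the tensor $\ell_i^\vee\otimes(\lambda_i^1,\lambda_i^2,\lambda_i^3,\lambda_i^4)$; hence $\p(\im C_f)$ is spanned by the ten points $[\ell_i^\vee\otimes(\lambda_i^1,\dots,\lambda_i^4)]\in\p^{11}$, and by Proposition~\ref{prop:propertiesLnew}(i) the rank of $C_f$ is generically $10$, so this span is a genuine $\p^9$.

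Next I would invoke Proposition~\ref{prop:propertiesLnew}(ii): for generic $f$ the intersection $W=\p(\im C_f)\cap X^2_{1,1,1,1}$ is a general threefold section of $\Seg(\p^3\times\p^2)$, i.e. a general codimension-$2$ linear section. Since each rank-$1$ point $[\ell_i^\vee\otimes(\lambda_i^1,\dots,\lambda_i^4)]$ lies both in $X^2_{1,1,1,1}$ (it is a decomposable tensor) and in $\p(\im C_f)$, it automatically lies on $W$. Applying the embedding $i^2_{4,4,4,4}$, which restricts to an isomorphism onto its image away from its base locus, carries $W$ to $i^2_{4,4,4,4}(W)$ and sends $[\ell_i^\vee\otimes(\lambda_i^1,\dots,\lambda_i^4)]$ to $\tilde q_i$; so $\tilde q_i\in i^2_{4,4,4,4}(W)$ for all $i$. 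This is the content of the statement, and the argument is essentially bookkeeping once Propositions~\ref{prop:addcat}-analogue and \ref{prop:propertiesLnew} are in hand.

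The only genuinely delicate point — and the one I would flag as the main obstacle — is justifying that the points $\tilde q_i$ are honestly recovered as $i^2_{4,4,4,4}$-images of points of $W$ rather than being lost to, or identified along, the base locus of $i^2_{4,4,4,4}$: one must check that the ten general linear forms $\ell_i$ give points of $X^2_{1,1,1,1}$ lying off that base locus, exactly as the analogous verification (that $\Seg(C\times\p^2)\cap\langle\mathcal C\rangle$ is $0$-dimensional) was needed in the proof of Proposition~\ref{prop:ellcurveLondon}. For generic $f$ this follows from a dimension count together with the genericity of the decomposition points, so I would state it as ``by genericity'' and, if desired, back it up by the Macaulay2 computation already referenced in the ancillary file. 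Everything else is a direct transcription of the $(2,3;3,3,3)$ argument with $6$ replaced by $10$ and the elliptic normal sextic replaced by the threefold $W$ of Corollary~\ref{cor:i4W}.
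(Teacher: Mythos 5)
Your proposal is correct and follows essentially the same route as the paper, which deduces the corollary directly from the analogue of Proposition \ref{prop:addcat} (the columns of $C_f$ are linear combinations of the vectors $\ell_i^{\vee}\otimes(\lambda_i^1,\dots,\lambda_i^4)$, so by injectivity $\p(\im C_f)$ equals their span) together with Proposition \ref{prop:propertiesLnew}(ii). Only two small remarks: each column of $C_f$ is a monomial-weighted sum of all ten vectors $C_i$, not a single block up to scalar (this does not affect the inclusion-plus-rank argument), and the base-locus worry is vacuous here since $i^2_{4,4,4,4}$ restricted to $X^2_{1,1,1,1}$ is the complete $(1,4)$ Segre--Veronese embedding, so every point of $W$ maps honestly to its image $\tilde q_i$.
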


By collecting Corollary \ref{cor:C2new} and Corollary \ref{cor:i4W} we immediately get the following:
\begin{cor}\label{cor:reduction} The number of simultaneous Waring decompositions with $ 10 $ summands of the general polynomial vector $ f $ 
such that $ (n,r;a_{1}, \ldots, a_{r})=(2,4;4,4,4,4) $ equals the number of presentations with $ 10 $ summands of the general $ \tilde{f} \in \p^{39} $ 
($ \tilde{f} $ is a polynomial of bi-degree $ (1,5) $ vanishing at $ 2 $ fixed generic points $ P,Q  \in \p^1\times \p^2$) of type 
\begin{equation}\label{eq:newdec0}
\tilde{f} = \sum_{i=1}^{10}g_{i}h_{i}^{5}
\end{equation}
where $ g_{i} \in \C[s,t]_{1} $, $ h_{i} \in \C[y_{0}, y_{1}, y_{2}]_{1} $ and $ g_{i}h_{i}^{5} $ vanishes at $ P,Q $, for any $ i \in \{1,\ldots, 5\} $. 
\end{cor}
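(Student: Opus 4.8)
The plan is to read the corollary off the two results it cites and then run a short dimension count so that ``general'' is transported correctly; each of the two decomposition counts is a constructible, hence generically constant, function, so it suffices to match them on general members. First I would invoke the confinement of Corollary \ref{cor:C2new}: for the general $f$ with $(n,r;a_{1},\dots,a_{r})=(2,4;4,4,4,4)$ and any simultaneous Waring decomposition $\tilde q_{1},\dots,\tilde q_{10}$ of $f$, each $\tilde q_{i}$ is $i^{2}_{4,4,4,4}$ of a point of the general threefold section $W=\p(\im C_{f})\cap X^{2}_{1,1,1,1}$ of $\Seg(\p^{3}\times\p^{2})$. In particular $f$ lies in $\Pi:=\langle i^{2}_{4,4,4,4}(W)\rangle$, and its simultaneous decompositions are precisely the $10$-secant $\p^{9}$'s to $i^{2}_{4,4,4,4}(W)$ through $f$ inside $\Pi$. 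Now Corollary \ref{cor:i4W} identifies $i^{2}_{4,4,4,4}(W)$ with the image of $\p^{1}\times\p^{2}$ under the linear system of divisors of type $(1,5)$ through two generic points $P,Q$; this realizes $\Pi$ as the $\p^{39}$ of bi-degree $(1,5)$ forms vanishing at $P,Q$, the points $\tilde q_{i}$ as the classes $[g_{i}h_{i}^{5}]$ with $g_{i}\in\C[s,t]_{1}$, $h_{i}\in\C[y_{0},y_{1},y_{2}]_{1}$ singled out by the vanishing at $P,Q$, and $f$ as a form $\tilde f$ of that type. Hence, for this $f$, its decompositions correspond bijectively to the presentations $\tilde f=\sum_{i=1}^{10}g_{i}h_{i}^{5}$ of the form (\ref{eq:newdec0}).

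To turn this into the numerical equality I would check that a general $f\in\p^{59}$ produces a general pair consisting of a threefold of the type described by Corollary \ref{cor:i4W} together with a general point $\tilde f$ of its span. Since $C_{f}$ is injective (Proposition \ref{prop:propertiesLnew}(i)), $\p(\im C_{f})$ is a $\p^{9}$, and $W$ — hence $i^{2}_{4,4,4,4}(W)$ and $\Pi$ — depends only on that $\p^{9}$; such $\p^{9}$'s sweep out a family of dimension at most $\dim\Gr(9,11)=20$, so the $f$'s inducing a fixed $W$ form a family of dimension at least $59-20=39$. On the other hand all of these $f$ lie in $\Pi$, which has projective dimension at most $39$, being the image of $\p^{1}\times\p^{2}$ under a linear system of that dimension. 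Therefore this family is exactly a $\p^{39}$ filling $\Pi$; consequently a general $f$ gives a general threefold section $W$ (as required for Corollary \ref{cor:i4W}, compatibly with Proposition \ref{prop:propertiesLnew}(ii)) and a general $\tilde f\in\Pi\cong\p^{39}$. Combined with the bijection of the first paragraph, the number of simultaneous Waring decompositions with $10$ summands of the general $f$ equals the number of presentations of the general $\tilde f$, which is the assertion.

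The step I expect to carry the real content — and the reason the corollary relies on Section \ref{sec:p3p2} — is the identification in the first paragraph: one must verify that the birational description of $i^{2}_{4,4,4,4}(W)$ from Corollary \ref{cor:i4W} as the $(1,5)$-through-$\{P,Q\}$ image of $\p^{1}\times\p^{2}$ carries the abstract confinement of Corollary \ref{cor:C2new} onto the explicit presentation problem, matching $\Pi$ with the $\p^{39}$ of $(1,5)$-forms vanishing at $P,Q$ and the admissible rank-one summands with the $[g_{i}h_{i}^{5}]$ vanishing there. Once that dictionary is set up, the remaining argument is the elementary fibre count above, which is why the passage can be recorded as immediate.
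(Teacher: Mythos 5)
Your proposal is correct and follows the paper's own route: the paper records Corollary \ref{cor:reduction} as an immediate consequence of collecting Corollary \ref{cor:C2new} (confinement of the ten decomposition points to $i^{2}_{4,4,4,4}(W)$) with Corollary \ref{cor:i4W} (the $(1,5)$-through-$\{P,Q\}$ description of that image), exactly the dictionary you set up. Your additional fibre-dimension count over $\Gr(9,11)$, showing a general $f$ yields a general threefold section and a general $\tilde f$ in its span $\p^{39}$, just makes explicit the genericity transfer that the paper leaves implicit.
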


Therefore our starting problem reduces to counting decompositions in the second case introduced in Corollary \ref{cor:reduction}. 
In order to do that, by means of  Bertini and Matlab software, we developed a procedure, which, inspired to the one introduced in the papers 
\cite{AngeGaluppiMellaOtt18} and \cite{Ange19}, allows us to prove from a computational point of view the following:

\begin{prop*}\label{prop:decred}
The general $ \tilde{f} \in \p^{39} $ admits $ 18 $ presentations with $ 10 $ summands as in (\ref{eq:newdec0}), up to re-ordering and re-scaling. 
\end{prop*}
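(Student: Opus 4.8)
**Proof proposal for Proposition* \ref{prop:decred}.**

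The plan is to set up the decomposition problem (\ref{eq:newdec0}) as a polynomial system whose solutions parametrize the simultaneous decompositions, and to solve it by numerical homotopy continuation with Bertini, certifying the solution count by a monodromy/trace test. First I would fix a general $\tilde f\in\p^{39}$: concretely, choose $2$ generic points $P,Q\in\p^1\times\p^2$, pick $10$ generic pairs $(g_i,h_i)$ with $g_i\in\C[s,t]_1$, $h_i\in\C[y_0,y_1,y_2]_1$ such that each $g_ih_i^5$ vanishes at $P$ and $Q$ (the vanishing conditions at $P,Q$ cut out a linear subvariety of the space of bi-degree $(1,5)$ forms, and one works inside $\p^{39}=\p(H^0(\o(1,5)\otimes\I_{\{P,Q\}}))$), and set $\tilde f=\sum_{i=1}^{10}g_ih_i^5$. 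This gives one known decomposition to seed the homotopy. The unknowns are the remaining decompositions $(\bar g_i,\bar h_i)_{i=1}^{10}$, subject to the same $2$ linear vanishing conditions per summand and to the $40$ equations $\sum_i \bar g_i\bar h_i^5=\tilde f$ (matching coefficients in $\p^{39}$), with the scaling ambiguity $g_i h_i^5 = (c^5 g_i)(c^{-1}h_i)^5$ and the reordering of the $10$ summands quotiented out.

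The key steps, in order, are: (1) verify by the parameter count in Corollary \ref{cor:reduction} (equivalently Remark \ref{rem:perfect} applied to the perfect case $(2,4;4,4,4,4)$) that the system is square after removing the continuous symmetries, so that the fibre of the decomposition map over a general $\tilde f$ is finite; (2) run a monodromy loop in Bertini starting from the seed solution above, moving $\tilde f$ (equivalently, moving the defining data $P,Q$ and the seed summands) around generic loops in the parameter space of bi-degree $(1,5)$ forms through $2$ points, collecting new solutions until the count stabilizes; (3) certify completeness of the computed solution set by the trace test (the linear trace of the solution points, as the parameter moves along a line, must be affine), which is the standard guarantee that no solution has been missed; (4) quotient the resulting finite solution set by the $S_{10}$ reordering action and the $(\C^\ast)^{10}$ (really, per-summand $5$-th root) scaling to extract the number of genuinely distinct decompositions, obtaining $18$. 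The computations are carried out with Bertini for the homotopies and monodromy and Matlab for the linear-algebra bookkeeping (coefficient extraction, orbit identification), following the template of \cite{AngeGaluppiMellaOtt18} and \cite{Ange19}; the ancillary file records the scripts.

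The main obstacle I expect is two-fold. Numerically, the system is moderately large ($10$ summands, each with $5$ essential parameters after imposing the two point conditions and fixing scaling, giving $\approx 30$ coordinates matching $40-$many equations modulo symmetry), and the monodromy must be run long enough — and with enough independent loops — that one is confident the orbit of the seed under monodromy is the full solution set; the trace test mitigates this but is itself sensitive to path-tracking tolerances, so care with Bertini's adaptive precision settings is needed, as discussed in \cite{HauensteinOedOttSomm19}. Conceptually, the delicate point is correctly modeling the ``vanishing at $P,Q$'' constraint so that $\p^{39}$ (and not a larger or smaller space) is the true ambient space, and correctly accounting for the scaling symmetry so that distinct Bertini solutions are not over- or under-counted when passing to decompositions; once the square system is set up faithfully, the homotopy and trace test do the rest, and the output is the asserted number $18$.
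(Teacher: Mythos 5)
Your overall numerical strategy (seed one known decomposition, run monodromy loops in Bertini, count until stable) is the same as the paper's, which builds the explicit square $40\times 40$ system \eqref{eq:matrices} and iterates triangle-shaped parameter loops until the count stabilizes at $18$; your addition of a trace test would in fact strengthen the evidence, which the paper does not do. However, there is a genuine gap in how you model the reduced problem, and it would derail the computation. You impose the ``vanishing at $P,Q$'' condition on each literal product $\bar g_i\bar h_i^5$ and call it two linear conditions per summand. It is not linear on the pair: $g_ih_i^5$ vanishes at $P=[1,0]\times[0,0,1]$ iff $g_i(1,0)h_i(0,0,1)^5=0$, i.e. iff $g_i$ or $h_i$ degenerates, so the locus of pairs satisfying both point conditions is a one-dimensional (highly non-generic) subvariety of $\p^1\times\p^2$, and sums of ten such special summands span only a small linear subspace of $\p^{39}$ — in particular your ``generic seed'' $\tilde f=\sum g_ih_i^5$ built this way is not a general point of $\p^{39}$, and the system you would hand to Bertini does not compute the number in Proposition \ref{prop:decred}. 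Your own parameter count betrays this: you arrive at roughly $30$ unknowns against $40$ equations ``modulo symmetry'', whereas the correct formulation is exactly square.

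The intended meaning of \eqref{eq:newdec0} (implemented in the paper's proof via \eqref{eq:newdec}) is that the summands are the images of \emph{unconstrained} pairs $(g_i,h_i)$ under the internal projection of Proposition \ref{3section}: each summand is $g_ih_i^5$ with its components along the monomials $sy_2^5$ and $ty_1^5$ (the evaluations at $P$ and $Q$) subtracted, equivalently one works with $g_ih_i^5$ modulo the span of those two monomials. With the normalization $h_i=y_0+v y_1+w y_2$ this gives $4$ parameters per summand, $40$ unknowns in all, and matching coefficients of the $42$ bidegree-$(1,5)$ monomials yields $40$ nontrivial equations (the two corresponding to $sy_2^5$ and $ty_1^5$ are identically satisfied), i.e. the square system of the paper; the only residual symmetry is reordering of the summands. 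Once the model is corrected in this way, your seed-plus-monodromy (and trace test) plan coincides with the paper's computation and would return $18$; as in the paper, the outcome remains numerical evidence rather than a proof, which is why the statement carries an asterisk.
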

\begin{proof}
Let $ \tilde{f} \in \p^{39} $  be a general element and let $ \{s,t\} $ (respectively $ \{y_{0},y_{1},y_{2}\} $) be a set of homogeneous coordinates for $ \p^{1} $ 
(respectively for $ \p^{2} $). 
Without loss of generality, we choose $ P = [1,0] \times [0,0,1] $ and $ Q = [0,1] \times [0,1,0] $ as generic points in $ \p^{1} \times \p^{2} $. If we write the polynomials 
 in \eqref{eq:newdec0} as $g_{i+1} = v_{4i+2}s+v_{4i+3}t$ and $h_{i+1}=y_0+v_{4i}y_1+v_{4i+1}y_2$, for $i=0,\dots,9$, then the condition $\tilde f(P)=\tilde f(Q)=0$, that is 
$$\sum_{i=1}^{10}g_i(1,0)h_i(0,0,1)=\sum_{i=1}^{10}g_i(0,1)h_i(1,0,0)=0$$
implies
$$\sum_{i=0}^9v_{4i+2}v_{4i+1} = \sum_{i=0}^9v_{4i+3}v_{4i} = 0.$$
Our aim is thus to find the set of $ \{v_{0}, \ldots, v_{39}\} \in \C^{40} $ such that $ \tilde{f} $ expresses as 
\begin{equation}\label{eq:newdec}
\tilde{f} = \sum _{i=0}^{9}[(v_{4i+2}s+v_{4i+3}t)(y_{0}+v_{4i}y_{1}+v_{4i+1}y_{2})^{5} - v_{4i+2}v_{4i+1}sy_{2}^5 - v_{4i+3}v_{4i} t y_{1}^5],
\end{equation}
where, in each summand, the monomials $ v_{4i+2}v_{4i+1}sy_{2}^5, \, v_{4i+3}v_{4i} t y_{1}^5 $ are subtracted so that $ \tilde{f} $ vanishes at $ P $ and $ Q $.
Equivalently, if $ [p_{0}, \ldots, p_{39}] $ represents a set of homogeneous coordinates for $ \tilde{f} $, then
\begin{equation} \label{eq:matrices}
\begin{pmatrix} 
p_{0} & \ldots & p_{14} & p_{15} & \ldots & p_{19} & 0 \cr
p_{20} & \ldots & p_{34} & 0 & p_{35} & \ldots & p_{39} \\
\end{pmatrix} =  \end{equation}
$$
= \sum_{i=0}^{9} \left[\begin{pmatrix} v_{4i+2} \cr v_{4i+3} \\ \end{pmatrix} \nu_{5}(1,v_{4i},v_{4i+1}) - \begin{pmatrix} 
0 & \ldots & 0 & 0 & \ldots & 0 & v_{4i+2}v_{4i+1} \cr
0 & \ldots & 0 & v_{4i+3}v_{4i} & 0 & \ldots & 0 \\
\end{pmatrix} \right]
$$
where $ \nu_{5} $ denotes the Veronese map of $ \p^{2} $ of degree $ 5 $.   The equality of matrices appearing in (\ref{eq:matrices}) translates into a 
non linear system consisting, in principle, of $ 42 $ equations. It is immediate to see that the condition $\tilde f(P)=\tilde f(Q)=0$ implies that two equations,
corresponding respectively to the monomials $sy_2^5$ and $ty_1^5$, are trivial. 
Thus, we get a square non linear system with $40$ equations, with $ \{v_{0}, \ldots, v_{39}\} $ as unknowns. We focus on the system $ F_{(p_{0}, \ldots, p_{39})}
([v_{0}, \ldots, v_{3}], \ldots, [v_{36}, \ldots, v_{39}]) $ obtained from the one mentioned above by re-arranging all the terms in each equation on one side of the equal sign. \\
In practice, to work with a general $ \tilde{f} $, we change $ \{v_{0}, \ldots, v_{39}\} $ with random complex numbers 
$ \{\underline{v}_{0}, \ldots, \underline{v}_{39}\} $ (\emph{start-point}) and then we compute, by means of (\ref{eq:matrices}), the corresponding 
$ [\underline{p}_{0}, \ldots, \underline{p}_{39} ] $ (vector of \emph{start-parameters}). By construction, the start-point is a solution of 
$ F_{(\underline{p}_{0}, \ldots, \underline{p}_{39})} $. In particular, the chosen start-point is the following:
{\small{$$ [\underline{v}_{0},\underline{v}_{1},\underline{v}_{2},\underline{v}_{3}] = [3.803150504548735 \cdot 10^{-1} + i  1.080968803617349 \cdot 10^{-1}, $$
$$ \quad\quad\quad\quad\quad\quad\quad\,\,  4.012914786260265 \cdot 10^{-2} + i 1.194906105430308 \cdot 10^{-2}, $$
$$ \quad\quad\quad\quad\quad\quad\quad\,\,-5.791791173087690\cdot 10^{-1} + i  7.036742613968909\cdot 10^{-1}, $$  
$$ \quad\quad\quad\quad\quad\quad\quad\,\,-3.976968438023937\cdot 10^{-1} + i  9.044873244848521\cdot 10^{-1} ] $$
$$ [\underline{v}_{4},\underline{v}_{5},\underline{v}_{6},\underline{v}_{7}] = [2.231698943133415\cdot 10^{-1} + i 6.687608922343052\cdot 10^{-1}, $$
$$ \quad\quad\quad\quad\quad\quad\quad\,\,6.002906583490685\cdot 10^{-1} + i 2.206993065311951\cdot 10^{-1}, $$
$$ \quad\quad\quad\quad\quad\quad\quad\,\,3.867065923659511\cdot 10^{-1} + i 1.345756657407951\cdot 10^{-1}, $$
$$ \quad\quad\quad\quad\quad\quad\quad\,\,1.526038777608076\cdot 10^{-1}  - i 6.566938139632470\cdot 10^{-1}] $$
$$ [\underline{v}_{8},\underline{v}_{9},\underline{v}_{10},\underline{v}_{11}] = [-1.736311597636939\cdot 10^{-1} -i 7.609350872905872\cdot 10^{-1}, $$
$$ \quad\quad\quad\quad\quad\quad\quad\,\,4.316168188831709\cdot 10^{-1} - i 4.213795287363188\cdot 10^{-1}, $$
$$ \quad\quad\quad\quad\quad\quad\quad\quad-1.741310170829991\cdot 10^{-1} + i 2.308979289387978\cdot 10^{-1}, $$
$$ \quad\quad\quad\quad\quad\quad\quad\quad-1.859590880062348\cdot 10^{-1} + i 2.841353874294199\cdot 10^{-1} ]$$
$$ [\underline{v}_{12},\underline{v}_{13},\underline{v}_{14},\underline{v}_{15}] = [-1.169608247513672\cdot 10^{-1} + i 8.765209704988050\cdot 10^{-1}, $$
$$ \quad\quad\quad\quad\quad\quad\quad\quad5.407734749067091\cdot 10^{-1} - i 3.124118196630669\cdot 10^{-1},$$
$$ \quad\quad\quad\quad\quad\quad\quad\quad1.527995379711511\cdot 10^{-1} + i 6.089233207073497\cdot 10^{-1}, $$
$$ \quad\quad\quad\quad\quad\quad\quad\quad 3.204152865278664\cdot 10^{-1} - i 5.904671666326385 \cdot 10^{-1}] $$
$$ [\underline{v}_{16},\underline{v}_{17},\underline{v}_{18},\underline{v}_{19}] = [-1.330737937092930 \cdot 10^{-1} - i 6.578838258756913\cdot 10^{-1}, $$
$$ \quad\quad\quad\quad\quad\quad\quad\quad\quad-7.385080142922554\cdot 10^{-1} + i 5.831003376892393 \cdot 10^{-1}, $$
$$ \quad\quad\quad\quad\quad\quad\quad\quad 6.699288861915320\cdot 10^{-1} - i 5.533657523766588\cdot 10^{-1}, $$
$$ \quad\quad\quad\quad\quad\quad\quad\quad3.902845736728862\cdot 10^{-1} - i1.485132206413023\cdot 10^{-1} ] $$
$$ [\underline{v}_{20},\underline{v}_{21},\underline{v}_{22},\underline{v}_{23}] = [2.065574523959247 \cdot 10^{-1} - i 6.446211342534475\cdot 10^{-1}, $$
$$ \quad\quad\quad\quad\quad\quad\quad\quad\quad-5.935341284909806\cdot 10^{-1} + i  9.380573668092805\cdot 10^{-1}, $$
$$ \quad\quad\quad\quad\quad\quad\quad\quad\,\, 6.398816151374838\cdot 10^{-1} + i 7.796882650167238\cdot 10^{-1}, $$
$$ \quad\quad\quad\quad\quad\quad\quad\quad\quad-5.451582606670785\cdot 10^{-1}+i  7.278281716630082\cdot 10^{-1} ]$$ 
$$ [\underline{v}_{24},\underline{v}_{25},\underline{v}_{26},\underline{v}_{27}] = [-8.648295254072200\cdot 10^{-1} + i 9.603906925323353\cdot 10^{-1}, $$
$$ \quad\quad\quad\quad\quad\quad\quad\quad1.185927730430883 \cdot 10^{-1} + i 9.581113986558572 \cdot 10^{-1}, $$
$$ \quad\quad\quad\quad\quad\quad\quad\quad\quad-1.272156730720730 \cdot 10^{-1} - i 3.826018264098715 \cdot 10^{-1}, $$
$$ \quad\quad\quad\quad\quad\quad\quad\quad\quad- 6.204082514618516 \cdot 10^{-1} + i 1.303932410632590 \cdot 10^{-1}]$$
$$ [\underline{v}_{28},\underline{v}_{29},\underline{v}_{30},\underline{v}_{31}] = [-4.055191686841270 \cdot 10^{-1} - i 1.061742318012200 \cdot 10^{-1}, $$
$$ \quad\quad\quad\quad\quad\quad\quad\quad 5.497821295556304 \cdot 10^{-1} - i 9.390776649698522 \cdot 10^{-1},$$
$$ \quad\quad\quad\quad\quad\quad\quad\quad\quad-9.573483983309962 \cdot 10^{-1} + i  6.119729292434732 \cdot 10^{-1}, $$
$$ \quad\quad\quad\quad\quad\quad\quad\quad\quad -3.625771545885574 \cdot 10^{-1} + i 5.248770839543854 \cdot 10^{-1}] $$
$$ [\underline{v}_{32},\underline{v}_{33},\underline{v}_{34},\underline{v}_{35}] = [-1.256230679554358 \cdot 10^{-1} - i 1.784554662383838 \cdot 10^{-1}, $$
$$ \quad\quad\quad\quad\quad\quad\quad\quad\quad -2.906342421922776 \cdot 10^{-1} - i 5.154050022761094 \cdot 10^{-1}, $$
$$ \quad\quad\quad\quad\quad\quad\quad\quad\quad -2.436126820291677 \cdot 10^{-1} - i 3.569908300218846 \cdot 10^{-1}, $$
$$ \quad\quad\quad\quad\quad\quad\quad\quad\quad -8.981673995870921 \cdot 10^{-1} + i 1.329775006009424 \cdot 10^{-1}] $$
$$ [\underline{v}_{36},\underline{v}_{37},\underline{v}_{38},\underline{v}_{39}] = [-1.778113408966572 \cdot 10^{-1} + i 1.553732129113020 \cdot 10^{-1}, $$
$$ \quad\quad\quad\quad\quad\quad\quad\quad 2.014979024950735 \cdot 10^{-1} + i 2.767539435584817 \cdot 10^{-1}, $$
$$ \quad\quad\quad\quad\quad\quad\quad\quad\quad -7.897330203466324 \cdot 10^{-1} + i 9.827999531425403 \cdot 10^{-1}, $$
$$ \quad\quad\quad\quad\quad\quad\quad\quad\quad - 3.766244866562784 \cdot 10^{-1} + i 1.708209472557658 \cdot 10^{-1}]. $$}}
\noindent At this point, by replacing in $ F_{(\underline{p}_{0}, \ldots, \underline{p}_{39})}$ the start-parameters with random complex values, 
we get two square polynomial systems $ F_{1}, F_{2} $ of order $ 40 $ and then we construct $ 3 $ segment homotopies $ H_{i} : \C^{40} 
\times  [0,1] \to \C^{40} $ for $ i \in \{0,1,2\} $, so that $ H_{0}$ is between $ F_{(\underline{p}_{0}, \ldots, \underline{p}_{39})} $ and $ F_{1}$, 
$ H_{1} $ connects $ F_{1} $ with $ F_{2} $ and $ H_{2} $ closes the \emph{triangle-loop}. By means of $ H_{0} $, we have a \emph{path} 
from the start-point to a solution of $ F_{1} $ (\emph{endpoint}), which at the second step becomes a start-point for $ H_{1} $, and so on. 
At the end of the triangle-loop, the output is compared with $ \{\underline{v}_{0}, \ldots, \underline{v}_{39}\} $. Since they differ, we restart the 
loop with these two as start-points, and so on. From the $ 6 ^{th}$ iteration on, the number of solutions of $ F_{(\underline{p}_{0}, \ldots, 
\underline{p}_{39})} $ stabilizes on $ 18 $: $ \{\underline{v}_{0}, \ldots, \underline{v}_{39}\} $ plus other $ 17 $ points (we refer to the 
ancillary file {\tt{CatIntConf.pdf}} for the complete list of points and to the ancillary folder {\tt{ternarie\_4444}} for all the files of which our procedure consists of). Therefore $[\underline{p}_{0}, \ldots, \underline{p}_{39}]$ has $ 18 $ decompositions, which allows us to 
conclude the proof.
\end{proof}

As a consequence of Corollary \ref{cor:reduction} and Proposition \ref{prop:decred}, we get the following:
\begin{thm*}\label{thm:4444}
The general polynomial vector $ f $ with $ (n,r;a_{1}, \ldots, a_{r})=(2,4;4,4,4,4) $ admits $ 18 $ simultaneous Waring decompositions 
with  $ 10 $ summands.
\end{thm*}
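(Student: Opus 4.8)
The plan is to obtain the statement directly from the two results immediately preceding it, so that the only real work is to be sure that the chain of reductions is a genuine \emph{equivalence} at the level of decompositions rather than merely a numerical coincidence. By Corollary~\ref{cor:reduction}, the number of simultaneous Waring decompositions with $10$ summands of a general polynomial vector $f$ with parameters $(2,4;4,4,4,4)$ equals the number of presentations $\tilde f=\sum_{i=1}^{10}g_ih_i^5$ of a general $\tilde f\in\p^{39}$ of the shape (\ref{eq:newdec0}) (with the two vanishing conditions at the fixed points $P,Q$); and by Proposition~\ref{prop:decred} the latter number is $18$ (that it is finite at all is the perfect-case expectation confirmed in Remark~\ref{rem:nodenew}). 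Hence the general $f$ admits $18$ simultaneous Waring decompositions with $10$ summands. The result is recorded with an asterisk because the count for $\tilde f$ rests on the Bertini monodromy (triangle-loop) computation rather than on a closed argument.

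The one point I would spell out before invoking Corollary~\ref{cor:reduction} is why its equality of \emph{numbers} reflects a genuine bijection of decompositions, in both directions. In one direction, the catalecticant confinement of Corollary~\ref{cor:C2new} forces every rank-$1$ summand $\tilde q_i$ of any decomposition of $f$ to lie on $i^2_{4,4,4,4}(W)$, where $W=\p(\im C_f)\cap X^2_{1,1,1,1}$ is determined by $f$; since $10$ general points of $X^2_{1,1,1,1}$ span a general $\p^9$ and hence pin down $W$ uniquely, no decomposition is lost when one confines to $W$. In the other direction, $i^2_{4,4,4,4}$ restricted to $W$ is an embedding --- it is cut out by the complete linear system of type $(1,5)$-forms through the two base points on the rational threefold $W$, which is very ample after the identification $W\simb\p^1\times\p^2$ of Proposition~\ref{3section} and Corollary~\ref{cor:i4W} --- so a presentation of $\tilde f$ pulls back to a bona fide simultaneous Waring decomposition of $f$ with the same number of summands. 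Composing with the birational parametrisation $\rho\colon\p^1\times\p^2\dasharrow W$ converts presentations on $W$ into presentations of $\tilde f$ of shape (\ref{eq:newdec0}), and the genericity hypotheses match because the assignment $f\mapsto\tilde f$ is dominant, which is the ``counting parameters'' remark in the proof of Proposition~\ref{prop:propertiesLnew}.

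With this correspondence in place, the count is exactly the content of Proposition~\ref{prop:decred}: from the explicit start-point and start-parameters the triangle homotopy loop in Bertini stabilises, from the sixth iteration onward, at a set of $18$ solutions of the square $40\times40$ system $F_{(\underline{p}_0,\dots,\underline{p}_{39})}$, and these pull back through the correspondence above to the $18$ decompositions of the general $f$. A minor bookkeeping point to state here is that the two ``trivial'' equations among the nominal $42$ vanish identically once $\tilde f(P)=\tilde f(Q)=0$ is imposed, so that the square $40$-equation system really has the same zero locus as the original overdetermined one.

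The main obstacle --- and the reason the theorem carries an asterisk --- is the usual limitation of numerical monodromy: the loop certifies that there are \emph{at least} $18$ decompositions and gives strong evidence (stabilisation over many further iterations, internal consistency of the outputs) that the monodromy action is transitive on the whole fibre, hence that $18$ is exact, but it is not a rigorous proof. A fully theoretical argument would require, e.g., a trace test or an a posteriori certification of the computed solution set; absent that, the statement should be flagged as computationally --- rather than theoretically --- established, with a pointer to the discussion of the reliability of Bertini's computations.
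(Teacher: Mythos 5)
Your proposal is correct and follows essentially the same route as the paper: the theorem is obtained by combining the reduction of Corollary~\ref{cor:reduction} (itself resting on the catalecticant confinement of Corollary~\ref{cor:C2new} and the birational description in Corollary~\ref{cor:i4W}) with the Bertini count of $18$ presentations in Proposition~\ref{prop:decred}, and your remarks on why the count transfers and on the asterisked, evidence-only status of the numerical step match the paper's own discussion.
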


\begin{rem*}
Theorem \ref{thm:4444} answers one of the open problems introduced in \cite{AngeGaluppiMellaOtt18}. Indeed, thanks to Corollary \ref{cor:reduction}, 
we focused on a square polynomial system of order $ 40 $ instead of the starting one of order $ 60 $, for which our computational technique 
did not provide an answer.  
\end{rem*}

By arguing as in Remark \ref{rem:extcat}, from Theorem \ref{thm:4444} and the observation that the linear space of cubic forms in three variables 
has dimension $10$, we deduce the following:

\begin{cor*}\label{thm:4444ext}
The general polynomial vector $ f = (f_{1},f_{2},f_{3},f_{4}, \ldots, f_{r})$ with $ f_{i} \in \sym^{4}\c^3  $ for $ i \in \{1,2,3,4\} $ and $ f_{i} \in 
\sym^{3}\c^3 $ for $ i \in \{5, \ldots, r \} $ admits $ 18 $ simultaneous Waring decompositions with $ 10 $ summands.
\end{cor*}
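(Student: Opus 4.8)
The plan is to reduce the count for the full vector $f=(f_1,\dots,f_r)$ to the count for its quartic part $(f_1,f_2,f_3,f_4)$, exactly as the proof of Corollary \ref{thm:collect2} (and Remark \ref{rem:extcat}) reduces the mixed cubic--quadratic case to the pure cubic one. First I would record that $(2,r;4,4,4,4,3,\dots,3)$ is again a perfect case with $k=10$: here $N=4\binom{6}{2}+(r-4)\binom{5}{2}=10(r+2)$, while $\dim X^2_{4,4,4,4,3,\dots,3}+1=r+2$. The elementary point that makes the reduction work is that $\dim\sym^3\c^3=\binom{5}{2}=10$, so that the cubes $\ell_1^3,\dots,\ell_{10}^3$ of ten linear forms in sufficiently general position form a \emph{basis} of $\sym^3\c^3$; in particular for such forms ``the cubes span $\sym^3\c^3$'' and ``the cubes are linearly independent'' are the same condition.

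Next I would establish the genericity statement that the reduction needs: for a general quartic vector $(f_1,f_2,f_3,f_4)$, in each of its $18$ decompositions with $10$ summands (these are exactly $18$ by Theorem* \ref{thm:4444}) the ten linear forms $\ell_1,\dots,\ell_{10}$ have cubes spanning $\sym^3\c^3$. This is a dimension count on the abstract $10$-secant variety $S^{10}_{X^2_{4,4,4,4}}$: the locus $B$ of $10$-secant data whose ten linear forms — the images of the ten points of $X^2_{4,4,4,4}\cong\Seg(\p^3\times\p^2)$ under the projection to the $\p^2$ factor — have linearly dependent cubes is closed and \emph{proper}, because ten general points of $\p^2$ give linearly independent cubes ($\nu_3(\p^2)$ being nondegenerate in $\p^9$). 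Hence $\dim B<\dim S^{10}_{X^2_{4,4,4,4}}=59$, so the image of $B$ under the $10$-secant map is a proper closed subset of $\p^{59}$, which the general quartic vector avoids.

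Then I would set up the bijection between decompositions. Restriction — keep the ten linear forms $\ell_i$ and the coefficients $\lambda_i^j$ for $j\le 4$, forget the $\lambda_i^j$ for $j\ge 5$ — carries any decomposition of $f$ with $10$ summands to one of $(f_1,f_2,f_3,f_4)$ with $10$ summands (all truncated coefficient vectors $(\lambda_i^1,\dots,\lambda_i^4)$ are nonzero, otherwise the quartic part would have rank $<10$). By the previous step each decomposition in the image has $\ell_1^3,\dots,\ell_{10}^3$ a basis of $\sym^3\c^3$, so the coefficients $\lambda_i^j$, $j\ge 5$, are recovered \emph{uniquely} by solving the linear systems $f_j=\sum_i\lambda_i^j\ell_i^3$: the restriction map is injective. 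It is surjective, since each of the $18$ decompositions of $(f_1,f_2,f_3,f_4)$ extends to one of $f$ by the same solve (any $f_j$ with $j\ge 5$ lies in the span of $\ell_1^3,\dots,\ell_{10}^3$, which is all of $\sym^3\c^3$). Therefore $f$ has exactly $18$ decompositions with $10$ summands, and $10$ is the rank since a shorter decomposition would restrict to one of the rank-$10$ quartic part.

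The one genuinely delicate point is the genericity claim of the second paragraph — that \emph{all} $18$ of the (a priori merely finitely many, and computationally determined) decompositions of a general quartic vector have linear forms with linearly independent cubes; everything else is the routine bookkeeping already used for Corollary \ref{thm:collect2} and Remark \ref{rem:extcat}. It all rests on the numerical coincidence $\dim\sym^3\c^3=10=k$, which simultaneously guarantees that the extension of a quartic decomposition exists and that it is unique.
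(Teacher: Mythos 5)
Your proposal is correct and follows essentially the same route as the paper, which deduces the corollary from Theorem* \ref{thm:4444} together with the observation that $\dim\sym^3\c^3=10$, exactly as Corollary \ref{thm:collect2} is deduced from Theorem \ref{thm:collect1} via the span of the squares: the cubes of the ten linear forms in any decomposition of the quartic part span all of $\sym^3\c^3$, so decompositions extend uniquely and restrict bijectively. Your only addition is to spell out the genericity step (that in all $18$ decompositions the cubes are linearly independent, via a dimension count on the abstract $10$-secant variety), a point the paper leaves implicit.
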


As we mentioned in the Introduction, we want to repeat here that computations by Bertini do not provide a theoretical proof that the number 
of decompositions is $18$, 
but they give us a strong evidence of the result. This is the reason for the asterisk  before the statement of the previous results.

\begin{rem} There are indeed two reasons for which the previous computation do not provide a theoretically complete proof that the number of different
decompositions of a polynomial vector of type $(2,4;4,4,4,4) $ is $18$. First, the computation has been performed with a choice of specific vectors $f$ which,
in principle, could be all contained in the Zariski closed locus where the number of decompositions is not general. Second, even if the computation by Bertini
stops after a certain lapse of time by returning $18$ decomposition, there is no theoretical guarantee  that we found all possible decompositions of $f$.\\
For the latter reason, there is nothing to do and we can only refer to the reliability of computations by Bertini, discussed
 in the paper \cite{HauensteinOedOttSomm19}.\\
On the other hand, for the former reason, we can argue that, by \cite{AngeGaluppiMellaOtt18},  the secant variety of the Segre-Veronese 
embedding of $\p^2\times\p^3$ of bi-degree $(1,4)$
is $\p^{59}$, which is a normal variety. Then the Zariski Main Theorem and the Stein Factorization Theorem (\cite{Hartshorne}, p.280)  prove that the number 
of connected components of the variety of the decompositions of
 a generic polynomial vector cannot be smaller that the number of connected components for a specific $f$. 
One can use \cite{COttVan14}, Lemma 2.5 in order to prove that the $18$ decompositions of $f$ that we found in Proposition* 
\ref{prop:decred} are isolated.
\end{rem}

The previous remark implies that the following result (without asterisk) holds.
\begin{thm}  
A general polynomial vector of type $(2,4;4,4,4,4) $ has \emph{at least} $18$ decompositions in terms of polynomial vector of rank $1$.
\end{thm}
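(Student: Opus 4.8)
The plan is to upgrade the computational count of Proposition*~\ref{prop:decred} to a genuine lower bound by a semicontinuity argument on the fibres of the abstract $10$-secant map, exactly as indicated in the Remark preceding the statement. Set $X := X^{2}_{4,4,4,4}$ and consider
$$ p \colon S^{10}_{X} \longrightarrow S^{10}(X) = \p^{59}, $$
where the equality is Remark~\ref{rem:nodenew}; note that $\p^{59}$ is smooth, hence normal, that $p$ is proper (the abstract secant variety is closed in a product of projective varieties), and that $p^{-1}(f)$ is by definition the variety parametrising the simultaneous Waring decompositions of $f$ with $10$ summands. Write $N(f)$ for the number of connected components of $p^{-1}(f)$. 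It suffices to prove $N(f) \ge 18$ for general $f$: choosing a decomposition inside each component then yields at least $18$ distinct decompositions, since distinct components are disjoint.

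First I would take the Stein factorization $p = h \circ g$, with $g \colon S^{10}_{X} \to Z$ proper with connected fibres and $h \colon Z \to \p^{59}$ finite and surjective; here the normality of $\p^{59}$ is what makes the Zariski Main Theorem applicable, as recalled in the Remark. Then $N(f) = \#\, h^{-1}(f)$ for every $f$, and for a finite morphism onto the normal variety $\p^{59}$ the fibre cardinality $f \mapsto \#\, h^{-1}(f)$ is lower semicontinuous — it equals $\deg h$ on a dense open subset and can only drop on the complementary closed locus. Hence $N$ is lower semicontinuous, so $N(f_{\mathrm{gen}}) \ge N(f_{0})$ for a general $f_{\mathrm{gen}}$ and every fixed $f_{0} \in \p^{59}$.

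Second I would exhibit an $f_{0}$ with $N(f_{0}) \ge 18$. Transporting the $18$ presentations of the explicit $\tilde f \in \p^{39}$ of Proposition*~\ref{prop:decred} backwards through Corollary~\ref{cor:reduction}, Corollary~\ref{cor:i4W} and the catalecticant confinement of Corollary~\ref{cor:C2new}, one obtains $18$ distinct simultaneous Waring decompositions of a specific polynomial vector $f_{0}$ of type $(2,4;4,4,4,4)$. By \cite{COttVan14}, Lemma 2.5 — again as in the Remark — each of these is an \emph{isolated} point of $p^{-1}(f_{0})$, and two distinct isolated points of $p^{-1}(f_{0})$ necessarily lie in distinct connected components; thus $N(f_{0}) \ge 18$. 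Combined with the first step, $N(f) \ge 18$ for general $f$ of type $(2,4;4,4,4,4)$, which is the assertion.

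The only point that is not purely formal is the back-transport in the second step. Proposition*~\ref{prop:decred} is phrased on the $\p^{39}$-side, as a statement about solutions of a polynomial system attached to the bi-degree $(1,5)$ model of a threefold section of $\Seg(\p^{3}\times\p^{2})$, while $p^{-1}(f_{0})$ lives on the $\p^{59}$-side among honest rank-$1$ decompositions on $X^{2}_{4,4,4,4}$. I would therefore spell out that the identification built from Corollary~\ref{cor:C2new} and Corollary~\ref{cor:i4W} is induced by a morphism which is an isomorphism in a neighbourhood of the relevant points, so that an isolated solution of the system really does translate into an isolated point of $p^{-1}(f_{0})$ and \cite{COttVan14}, Lemma 2.5 applies on the right side. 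Once this dictionary is pinned down, the semicontinuity of fibre cardinality via Stein factorization and the Zariski Main Theorem is standard, and the proof is complete.
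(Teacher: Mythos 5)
Your proposal is correct and follows essentially the same route as the paper: the theorem is deduced exactly as in the Remark preceding it, via the Stein factorization of the abstract $10$-secant map over the normal base $\p^{59}$ (Zariski Main Theorem giving semicontinuity of the number of connected components of the fibres), combined with the $18$ decompositions of the specific $f_{0}$ from Proposition*~\ref{prop:decred} being isolated by \cite{COttVan14}, Lemma 2.5. You simply spell out more explicitly two points the paper leaves implicit, namely the identification of connected components with points of the Stein factor and the back-transport of the $\p^{39}$-side solutions to honest decompositions on the $\p^{59}$-side.
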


\section{Final considerations}

\begin{rem}
After one realizes that catalecticant maps and confinement are useful to determine the number of simultaneous decompositions of $3$ cubics or $4$ quartics, 
one may imagine that  the procedure extends to the study of simultaneous decompositions of $d$ forms of degree $d$ in $3$ variables. 
Unfortunately, this does not happen. For example, the generic rank of $5$ ternary forms of degree $5$ is $15$ (\cite{AngeGaluppiMellaOtt18}).
On the other hand, there are no catalecticant maps providing a confinement for the sets of $15$ points that decompose
a general vector of $5$ quintics. For instance, derivatives of order $4$ determine a map $\C^{15}\to\C^{15}$ which is generically bijective
and covers the whole space spanned by $\p^2\times\p^4$. \smallskip
\end{rem}

We can analyze with catalecticant maps other types of polynomial vectors, if we consider the case in which  the rank is not the generic one,
and the decomposition is not unique. 

\begin{example}\label{ex:sextics}
Let us assume that $ r = 1 $, $ n = 2 $, $ a_{1} = 6 $, i.e. we deal with $ X_{6}^{2} = \nu_{6}(\p^2) \subset \p^{27} $, 
being $ \nu_{6}: \p^2 \to \p^{27} $ the Veronese map of $ \p^2 $ of degree $ 6 $. This case is not perfect, but nevertheless we 
focus on the generic $ f = f_{1} \in \sym^{6}\c^3 $ having the first sub-generic rank, i.e. $ k = 9 $. This is a classical case. 
It is well known (see e.g. Theorem 1.1 of \cite{COttVan17a})
that such $ f $ has two Waring decompositions with $ 9 $ summands, because of the presence of an elliptic 
normal curve of degree $ 3 $ in $ \p^2 $ passing through  the $ 9 $ points of a decomposition of $f$. Indeed, the elliptic cubic maps, 
via the $6$-Veronese embedding, 
to  an elliptic normal curve in $\p^{17}$ containing all the decompositions of $f$, which then are exactly $2$ by Proposition 5.2 of \cite{CCi06}.\\
We show how this example also follows from a catalecticant map.  Consider the \emph{catalecticant map} of the contraction by $f$
$$ C^3_{f}: (\sym^{3}\c^3)^{\vee} \to \sym^{3}\c^3 $$
where $  (\sym^{3}\c^{3})^{\vee} $ can be identified with the space of polynomial derivatives in $ 3 $ variables of order $ 3 $. $ C^3_{f} $ 
is associated to a square matrix of order $ 10 $, with $ 0 $-degree entries, denoted also by $ C^3_{f} $, where in each column there are 
the components, with respect to the standard monomial basis of $ \c[x_{0},x_{1},x_{2}]_{3} $, of $ \partial_{x_{u}}\partial_{x_{v}}
\partial_{x_{w}}f $, with $u,\,v, \, w \in \{0,1,2\} $. More in detail, assuming that $ f = \sum_{i=1}^{9} \lambda_{i}\ell_{i}^6 $, where, 
for simplicity, $ \ell_{i} = x_{0}+\mu_{i}^{1}x_{1}+\mu_{i}^{2}x_{2} $, $ i \in \{1, \ldots, 9\} $, we get that $ C_{f} $ equals
$$ 120 \sum_{i=1}^{9} \lambda_{i} \left( 
 C_{i} |  3\mu_{i}^{1} C_{i} | 3\mu_{i}^{2} C_{i} | 3\mu_{i}^{1}\mu_{i}^{1} C_{i} | 6\mu_{i}^{1}\mu_{i}^{2} C_{i} | 3\mu_{i}^{2}
 \mu_{i}^{2} C_{i} | \mu_{i}^{1}\mu_{i}^{1}\mu_{i}^{1} C_{i} | 3\mu_{i}^{1}\mu_{i}^{1}\mu_{i}^{2} C_{i} | 3\mu_{i}^{1}\mu_{i}^{2}
 \mu_{i}^{2} C_{i} | \mu_{i}^{2}\mu_{i}^{2}\mu_{i}^{2} C_{i} \right) $$
where 
$$ C_{i} = \left( 
1,3\mu_{i}^{1},3\mu_{i}^{2},3\mu_{i}^{1}\mu_{i}^{1},6\mu_{i}^{1}\mu_{i}^{2},3\mu_{i}^{2}\mu_{i}^{2}, \mu_{i}^{1}\mu_{i}^{1}
\mu_{i}^{1},3\mu_{i}^{1}\mu_{i}^{1}\mu_{i}^{2},3\mu_{i}^{1}\mu_{i}^{2}\mu_{i}^{2},\mu_{i}^{2}\mu_{i}^{2}\mu_{i}^{2} \right)^{t} 
\in \nu_{3}([\ell_{i}^{\vee}])  $$
being $ \nu_{3}: \p^2 \to \p^{9} $ the Veronese map of $ \p^2 $ of degree $ 3 $ and $ \ell_{i}^{\vee} = (1, \mu_{i}^{1},\mu_{i}^{2}) $ 
a representative vector for the dual point of $ \ell_{i} $. An explicit computation with Macaulay2 shows that for a generic choice of $ f $ we have that:
\begin{itemize}
\item $ C^3_{f} $ has rank $9$;
\item $\mathcal C_{9} = \p(\im(C^3_{f})) \cap X_{3}^{2}) \subset \p^8 $ is an elliptic curve of degree $9$;
\item $ \p(\im(C^3_{f})) = \langle \nu_{3}([\ell_{1}^{\vee}]), \ldots,\nu_{3}([\ell_{9}^{\vee}]) \rangle. $
\end{itemize}
For a computational verification of these properties, see the ancillary file {\tt{CatIntConf.pdf}}. By semicontinuity, it suffices to prove the above 
properties for a random choice of the $ \ell_{i} $'s. Therefore, for generic choice of $ f $ and for all 
$ i \in \{1, \ldots, 9\} $, the points $ \nu_{3}([\ell_{i}^{\vee}]) \in \mathcal C_{9} = \nu_{3}(\mathcal C_{3}) $, where $ \mathcal C_{3} \subset \p^2 $ is the 
unique elliptic normal curve of degree $ 3 $ passing through the points $ [\ell_{i}^{\vee}] $, according to Theorem 1.1 of \cite{COttVan17a}. 
\end{example}

\begin{example}\label{ex:octics}
Let us assume that $ r = 1 $, $ n = 2 $, $ a_{1} = 8 $, i.e. we deal with $ X_{8}^{2} = \nu_{8}(\p^2) \subset \p^{44} $, 
being $ \nu_{8}: \p^2 \to \p^{44} $ the Veronese map of degree $8$. We 
focus on $ f  \in \sym^{8}\c^3 $ having the first sub-generic rank, i.e. $ k = 14 $. 

By \cite{COttVan17a}, we know that the general $f$ as above is identifiable, i.e. it has a unique
decomposition as a sum of $14$ powers of linear forms.  Nevertheless, by Section 4 of \cite{AngeC} we know that
in the span of $14$ generic powers of linear forms there are forms $g$ which have two decompositions,
the second decomposition formed by linear forms different from the original ones.

By the analysis carried on in \cite{AngeC}, it turns out that for a general choice of $14$ linear forms $L_i$ in $3$
variables (which can be identified as general points in a space $\p^2$), the second decompositions 
of forms $g$ in the span of the set $\{L_1^8,\dots, L_{14}^8\}$ are determined by linear forms $M_1,\dots, M_{14}$
which lie in the unique quartic curve $C$ in $\p^2$ which passes through $L_1,\dots, L_{14}$.

This is is a case of confinement of the possible second decompositions of forms generated by $\{L_1^8,\dots, L_{14}^8\}$,
which is  confirmed by a catalecticant analysis. Namely, the image of the fourth-order derivatives of $f$ span a hyperplane
in the space  $\p^{14}=\p(\sym^{4}\c^3) $, which meets the image of $\p^2$ in the $4$th Veronese map $\nu_4$
in a curve: the curve is exactly $\nu_4(C)$.

Notice that one cannot use the catalecticant map to decide whether or not a form $f$ as above is identifiable. When
the image of the catalecticant map has the correct dimension, then it only depends on the linear forms $L_i$'s.
Thus, the existence of forms like $g$ in the span of $L_1^8,\dots,L_{14}^8$, whose second decompositions cover $C$,
forces the image of the catalecticant map to intersect the Veronese surface   $\nu_4(\p^2)$ in a curve, and not
in a finite set.

Catalecticant maps can guarantee the identifiability of a specific form $f$ only when the span of a minimal
decomposition $A=\{L_1,\dots, L_k\}$ of $f$ only contains forms whose unique minimal decomposition is a subset of $A$.
\end{example}

\begin{rem} A result by  Ballico (\cite{Ball19}, Thm.1.2), in the spirit of the two previous examples,
 provides other cases of confinement of decompositions. Ballico essentially
proves that if a decomposition $A$ of a form $f$ poses independent conditions to forms of degree $d'=\lfloor d/2 \rfloor$, 
then any other decomposition $B$ of $f$ with cardinality smaller or equal than $A$ must lie in the set $X$ cut by forms of degree $d'$
passing through $A$. 
When $X$ is positive dimensional, then it is possible that $f$ has many different decompositions, all of them confined to $X$.
An example of application of this confinement result, for quartics of rank $12$ in $5$ variables, will be the subject of
a forthcoming paper.
\end{rem}

Corollary \ref{thm:collect2} and Corollary \ref{thm:4444ext} are a consequence of a more general observation.

\begin{rem}
Let $ f = (f_{1}, \ldots, f_{r}) $ be a general polynomial vector of rank $ k $ over $ \F $, such that $ f_{j} \in \F[x_0,\ldots,x_n]_{a_{j}} $. 
Let $ f_{r+1}, \ldots, f_{s} $ 
be general elements of $ \F[x_0,\ldots,x_n]_{a_{r+1}} $, with $ a_{r+1} \leq a_{r} $. If $ k \geq \binom{a_{r+1}+n}{a_{r+1}} $, 
then the number of simultaneous Waring decompositions of $ f' = (f_{1}, \ldots, f_{s}) $ with $ k $ summands is greater or equal than the one of $ f $, 
unless a defective case occurs.

Instances of this phenomenon are $ (n,r;a_{1}, \ldots, a_{r}) = (2,3;3,3,3) $  with $k=6$ (as stated in Corollary \ref{thm:collect2}, by adding to a triple of  ternary 
cubics an arbitrary number of conics, the decompositions with six summands of the corresponding multiform are still two), and 
$ (n,r;a_{1}, \ldots, a_{r}) = (2,4;4,4,4,4) $ with $ k = 10 $  
(as stated in Corollary \ref{thm:4444ext}, by adding to four ternary quartics an arbitrary number of cubics, the decompositions with ten 
summands of the corresponding multiform are still $18$).
\end{rem}

\begin{rem}\label{rem:confinement}
Another perfect case in which the catalecticant approach gives a confinement of the decompositions is
 $ (n,r;a_{1}, \ldots, a_{r}) = (3,3;3,3,3) $, with $ k = 10 $. In this situation the decompositions are confined to a general threefold section
 of $\p^2\times\p^3$. The section is again birational to $\p^2\times \p^1$, but the
 map is defined by divisors of type $(4,3)$, with two general triple points, so it  is  different and more complicated than the one of Corollary \ref{cor:i4W}.
 
The number of simultaneous decomposition of $3$ quaternary cubics is $56$. It has been determined, by Bertini,
 in the paper \cite{AngeGaluppiMellaOtt18}. 
 
\end{rem}

\begin{rem}
Let $ f = (f_{1}, \ldots, f_{r}) $ be a general polynomial vector such that $ f_{j} \in \F[x_0,\ldots,x_n]_{a_{1}} $ for any $ j $. The rank of $ f $ over $ \F $ is 
$ k = \left\lceil \frac{r}{n+r}\binom{a_{1}+n}{n}\right\rceil $; see \cite{AngeGaluppiMellaOtt18}. Let $ f_{r+1}, \ldots, f_{r+s} $ be 
general elements of $ \F[x_0,\ldots,x_n]_{a_{2}} $, with $ a_{2} < a_{1} $. 
The rank over $ \F $ of the multiform $ f' = (f_{1}, \ldots, f_{r+s}) $  is given by $ k' =  \left\lceil \frac{1}{n+r+s}\left(r\binom{a_{1}+n}{n}+
s\binom{a_{2}+n}{n}\right)\right\rceil $, \cite{AngeGaluppiMellaOtt18}. 
If $ k \geq k' $, then $ X^{n}_{a_{1}, \ldots, a_{1}, a_{2}, \ldots, a_{2}} $, where $ a_{1} $ and $ a_{2} $ are repeated, respectively, $ r $ and $ s $ times, 
is $k'$-defective. For example, this fact occurs in the following cases:
\begin{itemize}
\item[$\bullet$] $ (n,r;a_{1}) = (2,1;8) $, with $ k = 15, s = 1, a_{2} = 4, k' = 15 $; 
\item[$\bullet$] $ (n,r;a_{1}) = (2,1;12) $, with $ k = 31, s = 1, a_{2} = 5, k' = 28 $;
\item[$\bullet$] $ (n,r;a_{1}) = (2,1;19) $, with $ k = 70, s = 1, a_{2} = 3, k' = 55 $.
\end{itemize}
\end{rem}

 Let us finally notice that the case addressed in Theorem \ref{thm:collect1} is quite peculiar. 
 One can numerically show that there are no other pairs of projective bundles 
$ X^{n}_{d, \ldots, d} $ and $ X^{n}_{d, \ldots, d, d-1} $, with $ n \geq 2 $ and $ d \geq 3 $, providing two perfect cases with 
the same rank $ k $ and $\dim(X_{1, \ldots, 1}^{n} \cap \p^{k-1}) = 1 $ (in the three projective bundles quoted above, the
dots denote that the same degree - $ d $ or $ 1 $ - is repeated $ r-1 $ times).  

\begin{ack} 
The authors are members of the Italian GNSAGA-INDAM and are supported by the Italian PRIN 2015 - 
Geometry of Algebraic Varieties (B16J15002000005).
\end{ack}

\bibliographystyle{amsplain}
\bibliography{biblioLuca}

\end{document}